\newtheorem{theorem}{Theorem}[section]
\newtheorem{example}[theorem]{Example}
\newtheorem{corollary}[theorem]{Corollary}
\newtheorem{lemma}[theorem]{Lemma}
\newtheorem{proposition}[theorem]{Proposition}
\theoremstyle{remark}
\newtheorem{remark}[theorem]{Remark}
\numberwithin{equation}{section}
\definecolor{blue}{rgb}{0,0,1}
\definecolor{red}{rgb}{1,0,0}
\definecolor{green}{rgb}{0,.6,.2}
\definecolor{purple}{rgb}{1,0,1}
\long\def\red#1\endred{\textcolor{red}{#1}}
\long\def\blue#1\endblue{\textcolor{blue}{#1}}
\long\def\purple#1\endpurple{\textcolor{purple}{ #1}}
\long\def\green#1\endgreen{\textcolor{green}{#1}}
\renewcommand{\subset}{\subseteq}
\renewcommand{\mid}{\, | \,}
\newcommand{\A}{\mathbb{A}}
\newcommand{\Aut}{\operatorname{Aut}}
\newcommand{\bs}{\backslash}
\newcommand{\comment}[1]{}
\newcommand{\cInd}{\operatorname{c-Ind}}
\newcommand{\C}{\mathbb{C}}
\newcommand{\e}{\varepsilon}
\newcommand{\E}{\mathcal{E}}
\newcommand{\F}{\mathbb{F}}
\newcommand{\g}{\gamma}
\newcommand{\Gal}{\operatorname{Gal}}
\newcommand{\GL}{\operatorname{GL}}
\newcommand{\Ind}{\operatorname{Ind}}
\newcommand{\I}{\mathcal{I}}
\newcommand{\Irr}{\operatorname{Irr}}
\newcommand{\leg}{\overwithdelims ()}
\newcommand{\mat}[4]{\begin{pmatrix} {#1} & {#2} \\ {#3} & {#4}
  \end{pmatrix}}
\newcommand{\meas}{\operatorname{meas}}
\renewcommand{\mod}{\text{ mod }}
\newcommand{\new}{\mathrm{new}}
\renewcommand{\O}{\mathfrak{o}}
\newcommand{\ol}{\overline}
\newcommand{\olG}{\overline{G}}
\newcommand{\p}{\mathfrak{p}}
\renewcommand{\P}{\mathfrak{p}_E}
\newcommand{\PGL}{\operatorname{PGL}}
\newcommand{\Q}{\mathbb{Q}}
\newcommand{\Qb}{\overline{\Q}}
\newcommand{\R}{\mathbb{R}}
\newcommand{\Res}{\operatorname{Res}}
\newcommand{\s}{\mathtt{s}}
\newcommand{\sg}[1]{\left<{#1}\right>}
\newcommand{\smat}[4]{\bigl(\begin{smallmatrix}{#1}&{#2}\\{#3}&{#4}\end{smallmatrix}\bigr )}
\newcommand{\tr}{\operatorname{tr}}
\newcommand{\w}{\omega}
\newcommand{\Z}{\mathbb{Z}}
\def\@tocline#1#2#3#4#5#6#7{\relax
  \ifnum #1>\c@tocdepth 
  \else
    \par \addpenalty\@secpenalty\addvspace{#2}%
    \begingroup \hyphenpenalty\@M
    \@ifempty{#4}{%
      \@tempdima\csname r@tocindent\number#1\endcsname\relax
    }{%
      \@tempdima#4\relax
    }%
    \parindent\z@ \leftskip#3\relax \advance\leftskip\@tempdima\relax
    \rightskip\@pnumwidth plus4em \parfillskip-\@pnumwidth
    #5\leavevmode\hskip-\@tempdima
      \ifcase #1
       \or\or \hskip 1em \or \hskip 2em \else \hskip 3em \fi%
      #6\nobreak\relax
    \hfill\hbox to\@pnumwidth{\@tocpagenum{#7}}\par
    \nobreak
    \endgroup
  \fi}
\begin{document}
\title{Counting newforms with prescribed ramified supercuspidal components}
\author{Andrew Knightly}
\address{Department of Mathematics \& Statistics\\University of Maine
\\Neville Hall\\ Orono, ME  04469-5752, USA }

\author{Kimball Martin}
\address{Department of Mathematics $\cdot$ International Research and Education Center, Graduate School of Science, Osaka Metropolitan University, Osaka 558-8585, Japan}
\email{kimball@omu.ac.jp}
\address{Department of Mathematics, University of Oklahoma, Norman, OK 73019 USA}
\email{kimball.martin@ou.edu}

\begin{abstract} 
We give a formula for the number of newforms in $S_k^{\new}(N)$ that have prescribed ramified
supercuspidal components $\pi_p$ at a set $T$ of primes dividing $N$.
This dimension is given in terms of the trace of the Atkin--Lehner operator at 
$T$ on $S_k^{\new}(N)$. It depends only upon the weight, the level, the ramified 
quadratic extensions $E_p/{\mathbb Q}_p$ attached to the $\pi_p$, and the root number 
of each $\pi_p$.
The formula is completely explicit when $T$ consists of either a single prime or all prime
factors of $N$.
\end{abstract}

\maketitle
\date{\today}

\today
\thispagestyle{empty}

\section{Introduction}

A basic application of the trace formula is computing the dimensions of the spaces $S_k(N)$
of holomorphic cusp forms of weight $k$ and level $N$.  
There are various decompositions of $S_k(N)$ into smaller spaces, and it 
is natural to ask for dimensions of these spaces.  First, Atkin and Lehner decomposed 
$S_k(N)$ into a new space $S_k^\new(N)$ and an old space $S_k^{\mathrm{old}}(N)$ of forms coming from
lower levels.  Dimensions of new (and old) spaces can be computed recursively, 
and more explicit formulas were derived in \cite{GMar}.

Since each Hecke eigenform $f\in S_k^\new(N)$ determines an irreducible representation $\pi_p$ of 
$\PGL_2(\Q_p)$ for each prime $p$,
one can further decompose $S_k^\new(N)$ according to the possible local components 
$\pi_p$ at the primes $p \mid N$.  When $N$ is squarefree, $\pi_p$ is 
determined by the local root number $\varepsilon_p = \pm 1$, i.e., 
the local Atkin--Lehner sign.  In this setting, the second author \cite{KMar} gave dimension formulas for 
the subspaces of $S_k^\new(N)$ with any fixed collection of signs 
$\{ \varepsilon_p \}_{p \mid N}$.  While asymptotically all collections of 
signs are equally likely, there is actually a bias towards/against certain collections of 
local signs, as well as a bias towards the global root number 
$\varepsilon = (-1)^{k/2} \prod_{p \mid N} \varepsilon_p$ being $+1$.  The dimension 
formulas for specifying a single local or global root number were extended to general levels in 
\cite{M23,M25}.  Again, there is typically a bias towards one local or global root number 
over the other.

If $p^2 \mid N$, the possible local representations at $p$ are no longer determined 
by just the local root number.  The first author \cite{K} recently gave dimension 
formulas for spaces of forms whose local component $\pi_p$ at each $p \mid N$ is a fixed
supercuspidal of conductor $p^2$ or $p^3$, assuming for technical reasons that
$k > 2$.  This is essentially the most refined dimension formula one might hope for 
for levels such that $v_p(N) \in \{ 2, 3 \}$ for each $p \mid N$. 
(When $p^2 \parallel N$, there are other possible local representations at $p$, but they are 
not minimal.)   Here as well there is a bias towards/against certain collections of local representations.
The key ingredient in \cite{K} is the explicit computation of local elliptic orbital
integrals attached to matrix coefficients of the fixed supercuspidals.  The reason for the
restriction to conductors with small exponents is that these integrals become quite complicated when 
the exponent is large.

Our main result is Theorem \ref{dim} below, which is a dimension formula that allows 
for prescribed supercuspidals of any odd-power conductor (the ``ramified" supercuspidals).
 This formula is obtained without computing local orbital integrals explicitly, 
 by blending the approaches of \cite{K} and \cite{M23,M25}.
First, the simple trace formula in \cite{K} expresses the dimension as the sum of a main term and
a certain global elliptic orbital integral (see \eqref{dim1} below).  For a ramified supercuspidal $\pi_p$,
we show that the value of the local orbital integral is the product of the local root number $\e_p$
with a constant depending only on the ramified quadratic extension $E_p/\Q_p$ determined by $\pi_p$.
This, together with an analysis of the Galois orbit of $\pi_p$ given in Proposition \ref{prop:locgal},
 allows us to express the desired dimension in terms of the trace of an Atkin-Lehner operator on
the full space of newforms of the given level.
Traces of such operators were obtained in \cite{M23,M25}.
The method can be extended to other cases where the local root number appears in the orbital integral
(see Remark \ref{dimrem}(b) below).

\subsection{Main result}

To state the result precisely, fix a squarefree odd integer $T>1$, an integer 
$M\ge 1$ relatively prime to $T$, and an integer $N$ of the form
\begin{equation}\label{N}
N=M\prod_{p|T}p^{2r_p+1}
\end{equation}
with each $r_p\ge 1$ and $r_3=1$  if $3\mid T$.
For each $p \mid T$, fix a supercuspidal representation $\pi_p$ of $\PGL_2(\Q_p)$ 
of conductor $p^{2r_p+1}$.  It has an associated ramified quadratic extension $E_p/\Q_p$
that appears in the inducing data on both sides of the local Langlands correspondence.
We let $\pi_T =(\pi_p)_{p|T}$ denote this tuple of representations.

Let
\[S_k^\new(N;\pi_T)\subset S_k^\new(N)\]
denote the subspace spanned by the newforms that have local component $\pi_p$ at each $p\mid T$.
Because every irreducible admissible representation of $\PGL_2(\Q_p)$ with
conductor $p^{2r_p+1}$ must be supercuspidal (see the table at the end of \cite[\S1]{Sch}),
 we have
\begin{equation}\label{orth}
S_k^{\new}(N)=\bigoplus_{\pi_T}S_k^{\new}(N;\pi_T),
\end{equation}
an orthogonal direct sum.  The purpose of this paper is to compute the dimension of each 
subspace on the right-hand side when $k\ge 4$.  There are two striking qualitative
features of our result, namely for $N,k$ and $T\ge 5$ fixed as above:
\begin{itemize}
\item As $\pi_T$ varies over all tuples, there are only three possibilities for 
  $\dim S_k^{\new}(N;\pi_T)$, of the form $\I-\E, \I, \I+\E$ where $\I,\E>0$. 
The middle case occurs for all $\pi_T$ except those for which $E_p=\Q_p(\sqrt{-T})$ for all $p\mid T$.
\item In all cases where the bias $\E$ has been computed explicitly (i.e., when
$M=1$ or $T$ is prime), it depends only on $T$ and $M$ in \eqref{N}, and not the conductor 
exponents $2r_p+1$ or $k$. 
\end{itemize}

Our main result is the following.

\begin{theorem}\label{dim} 
Suppose $T\ge 5$ is odd, and fix a tuple $\pi_T=(\pi_p)_{p|T}$ as above.
Let $\e_{\pi_T}=\prod_{p|T}\e_p$ be the product of the root numbers of the 
$\pi_p$.   
 Define 
\[\Delta(\pi_T)=\begin{cases}1&\text{if }E_p=\Q_p(\sqrt{-T})\text{ for all $p \mid T$}\\
0&\text{otherwise.}\end{cases}\]
Let $k\ge 4$ be even and let $N$ be as in \eqref{N} (with $v_3(N)=3$ if $3\mid T$).
Then 
\begin{equation}\label{main}
\dim S_k^\new(N;\pi_T) = \frac{k-1}{12}\psi^\new(M)\prod_{p|T}\frac{p^2-1}2p^{r_p-1}
+\Delta(\pi_T)\e_{\pi_T}\frac{\tr(W_T|S_k^\new(N))}{\prod_{p|T}(p-1)p^{r_p-1}},
\end{equation}
 where $W_T=\prod_{p|T}W_p$ is the Atkin-Lehner operator at $T$ of level $N$, and
 $\psi^{\new}$ is the multiplicative function defined on prime powers by
\[\psi^{\new}(p^a) = \begin{cases} 
p(1-\frac1p)&\text{if }a=1\\
p^2(1-\frac1p-\frac1{p^2})&\text{if }a=2\\
p^a(1-\frac1p)(1-\frac1{p^2})&\text{if }a\ge 3.\end{cases}
\]

The above dimension formula is given explicitly in the two special cases $M=1$ 
and $T$ prime as follows.
Define constants $b_{T,e}$ according to the values in following table:

\begin{center}
\begin{tabular}{r|rrr}
$e$  & $T \equiv 1 \bmod 4$ & $T \equiv 3 \bmod 8$ & $T \equiv 7 \bmod 8$ \\
\hline 
$0$ & $1/2$ & $2$ & $1$ \\
$1,2$ & $-1/2$ & $-1$ & $0$ \\
$3$ & $1/2$ & $-3$ & $0$ \\
$4$ & $0$ & $3/2$ & $-1/2$ \\
$\ge 5$ & $0$ & $0$ & $0$ 
\end{tabular}
\end{center}

If $M=1$, then
\begin{equation}\label{M1}
\dim S_k^\new(N;\pi_T) = \frac{k-1}{12}\prod_{p|T}\frac{p^2-1}2p^{r_p-1}
+\Delta(\pi_T)\e(k,\pi_T) b_{T,0}\, h(-T)
\end{equation}
where $h(-T)$ is the class number of $\Q(\sqrt{-T})$ and $\e(k,\pi_T)=(-1)^{k/2}\e_{\pi_T}$ is
the common global root number of the newforms spanning $S_k^{\new}(N;\pi_T)$.

If $T=p\ge 5$ is prime, then
\begin{equation}\label{Tp}
\dim S_k^\new(N;\pi_T) = \frac{k-1}{12}\psi^\new(M)\frac{p^2-1}2p^{r_p-1}
+\Delta(\pi_T)(-1)^{k/2}\e_{\pi_T} b_{p,v_2(M)} \, \kappa_{-p}(M') \, h(-p), 
\end{equation}
where $M'$ is the odd part of $M$ and $\kappa_{-p}$ is 
the multiplicative function given on odd prime powers $\ell^m$ by
\[ \kappa_{-p}(\ell^m) = 
\begin{cases}
{-p \leg \ell} - 1 & m = 1 \\
-{-p \leg \ell} & m = 2 \\
0 & m \ge 3.
\end{cases} \]
\end{theorem}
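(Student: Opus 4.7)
The strategy, as the introduction outlines, is to combine the simple trace formula of \cite{K} with the explicit Atkin--Lehner trace computations of \cite{M23,M25}. I would start from the simple trace formula expressing $\dim S_k^\new(N;\pi_T)$ as a main term plus a sum of global elliptic orbital integrals, with test function whose Archimedean component picks out weight $k$, whose component at each $p|T$ is a normalized matrix coefficient $f_{\pi_p}$ of $\pi_p$, and whose component at each $p|M$ is a newform projector. A direct volume and degree computation matches the main term with $\frac{k-1}{12}\psi^\new(M)\prod_{p|T}\frac{p^2-1}{2}p^{r_p-1}$, reducing the task to evaluating the elliptic sum.

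The core step is the evaluation of the local elliptic orbital integral of $f_{\pi_p}$ at each $p|T$. Extending the technique of \cite{K} beyond small conductor exponents, I would show that this integral vanishes unless the elliptic element $\gamma$ generates the ramified quadratic extension $E_p/\Q_p$ attached to $\pi_p$, and otherwise equals $\e_p$ times an $E_p$-intrinsic factor depending only on $E_p$ and on $\gamma$. A global elliptic $\gamma$ generates an imaginary quadratic field $F/\Q$, and the vanishing condition forces $F\otimes_\Q\Q_p\cong E_p$ for every $p|T$; a short ramification argument then forces $F=\Q(\sqrt{-T})$ and hence $E_p=\Q_p(\sqrt{-T})$ for every $p|T$. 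This accounts for the factor $\Delta(\pi_T)$ and lets $\e_{\pi_T}=\prod_{p|T}\e_p$ be pulled out of the elliptic sum as a common scalar.

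The residual elliptic sum is then independent of the individual $\pi_p$ within its $E_p$-family. Invoking Proposition \ref{prop:locgal}, which describes the local Galois orbits of the $\pi_p$, I would identify this residual sum with the elliptic contribution to $\tr(W_T|S_k^\new(N))$, up to the volume ratio $\prod_{p|T}(p-1)p^{r_p-1}$. Since $T>1$, the global Atkin--Lehner operator $W_T$ has vanishing identity, unipotent, and hyperbolic contributions in the trace formula, so its elliptic part is the whole trace. Assembling these pieces yields \eqref{main}.

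For the explicit cases \eqref{M1} and \eqref{Tp}, I would substitute the closed-form evaluations of $\tr(W_T|S_k^\new(N))$ and $\tr(W_p|S_k^\new(N))$ from \cite{M23,M25}, use classical Eichler class-number identities to introduce $h(-T)$, and reorganize the $p$-local constants into the tabulated values $b_{T,e}$ and the multiplicative function $\kappa_{-p}$. The main obstacle is the local orbital integral computation: separating the root number $\e_p$ cleanly from an $E_p$-intrinsic factor for supercuspidals of arbitrary odd conductor exponent, which is precisely the obstruction that limited \cite{K} to exponents at most $3$.
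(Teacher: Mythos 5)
Your overall strategy---expressing the dimension as a main term plus a single elliptic orbital integral, factoring the root number $\e_p$ out of the local integral, detecting the support condition forcing $E_p = \Q_p(\sqrt{-T})$, and invoking the Atkin--Lehner trace to close the loop---matches the paper's. But two of your steps differ substantively from the paper's proof, and one of them is a genuine gap.

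First, you assert that after factoring out $\e_p$ the local orbital integral equals ``an $E_p$-intrinsic factor depending only on $E_p$ and on $\gamma$.'' This is not obvious from the support of $f_{\pi_p}$: the inducing character $\lambda_p$ restricted to $\Q_p^\times\O_{E_p}^\times U_p^{r_p}$ still depends on the parameter $t$ of Proposition~\ref{chiprop}, not only on $E_p$ and $\e_p$, and the paper emphasizes that the explicit evaluation of this orbital integral remains open in general. The paper does not prove the required independence by a local computation; it proves Proposition~\ref{prop:dimeq}, a purely spectral argument showing that $\dim S_k^\new(N;\pi_T)$ is unchanged under Galois conjugation of the $\pi_p$, and then deduces the independence of $\Phi_T$ from \eqref{dim1}. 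You cite Proposition~\ref{prop:locgal} but place it in a different role, and you omit the argument that actually secures the independence you are implicitly assuming.

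Second, and more seriously, you propose to pin down the unknown constant by ``identifying this residual sum with the elliptic contribution to $\tr(W_T|S_k^\new(N))$, up to the volume ratio.'' That would require relating elliptic orbital integrals of the matrix-coefficient test functions $f_{\pi_p}$ to orbital integrals of the very different local test functions (characteristic functions translated by the Atkin--Lehner element) that appear in the geometric side of the trace of $W_T$. You offer no argument for such a proportionality, and none appears in the paper. Indeed, the paper never touches the geometric side of the $W_T$ trace formula at all: it uses the \emph{spectral} identity $\tr(W_T|S_k^\new(N)) = \sum_{\pi_T}\e_{\pi_T}\dim S_k^\new(N;\pi_T)$ coming from \eqref{orth}, substitutes the structural form $\dim = \mathcal{I} + \Delta(\pi_T)\e_{\pi_T}A$ established in Lemma~\ref{Phip}, observes that the main terms cancel (since the tuples split evenly by $\e_{\pi_T}$) and that only the $\Delta(\pi_T)=1$ tuples survive, and solves $\tr(W_T|S_k^\new(N)) = A\prod_{p|T}(p-1)p^{r_p-1}$ for $A$. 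Your geometric-side comparison, as stated, is a gap that the paper's spectral-side argument is specifically designed to circumvent.
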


\begin{remark}\label{dimrem} 
(a) This theorem says that for fixed $N, T, k$ as above, the dimensions of the 
subspaces $S_k^\new(N;\pi_T)$ are of the form $\I + \delta \varepsilon A$ where $\I, A$ 
are constant, and $\delta \in \{ 0, 1 \}$ and $\varepsilon = \pm 1$ depend upon the choice 
of $\pi_p$'s for $p \mid T$.  
The condition $T \ge 5$ odd, which guarantees that there is a just a single elliptic 
orbital integral on the geometric side of the relevant trace formula (see \eqref{dim1}), 
 is necessary for a result of this form.  
For instance, there are four choices for $\pi_3$ in level $27$, and the four spaces 
$S_6^\new(27;\pi_3)$ have dimensions $1, 2, 2, 2$ by \cite[Theorem 7.16]{K};
this is not compatible with the form $\I + \delta \varepsilon A$.  The difference is that 
there are two more elliptic terms in the trace formula in this case.

(b) Some of our results also incorporate unramified (i.e., even conductor exponent) 
  supercuspidal representations in addition to ramified ones.
  Proposition \ref{Tprop} generalizes the $\Delta(\pi_T)=0$ case of the above theorem to allow for 
  prescribed supercuspidals of any conductor exponent,
 giving conditions under which the dimension is just the main term.
In \S\ref{dz} we indicate how one can extend Theorem \ref{dim} to incorporate 
depth zero (conductor $p^2$) supercuspidals at certain places.

(c) The explicit calculations of $\tr(W_T|S_k^{\new}(N))$ that yield \eqref{M1} and \eqref{Tp}
   (see \eqref{WTM1} and \eqref{WTTp}) come from \cite[Proposition 3.2]{M23} 
  and \cite[\S 4.2]{M25} respectively, 
  and are obtained from the (classical) trace formula for Atkin--Lehner operators.  
   (These references contain a couple of typographical errors, 
  which we fix in  the appendix.)  Similar methods should yield an explicit formula for 
  $\tr(W_T|S_k^{\new}(N))$ in general, but we do not attempt to carry this out here.

(d) The factor $\frac{p^2-1}2p^{r_p-1}$ appearing in the main term is the formal degree of $\pi_p$ relative to the
Haar measure for which $\meas(\PGL_2(\Z_p))=1$.  (See Lemma \ref{fdlem} and \S \ref{sec:related} below.)  

(e) The special case of \eqref{M1} in which all $r_p=1$ was first given in \cite[Theorem 1.2]{K}.
As mentioned earlier, a surprising feature of the more general case \eqref{M1} (and also \eqref{Tp})
 is that the elliptic term is the same -- it is unchanged if we increase the conductor exponents of the $\pi_p$'s.
\end{remark}

In \S\ref{scmodel} we give an explicit model for the unitary ramified supercuspidal representations
  of $\GL_2(\Q_p)$, following Kutzko.
In \S\ref{orbits} we show that the Galois orbit of a given $\pi_p$ as in Theorem \ref{dim}
 consists of all supercuspidals of $\PGL_2(\Q_p)$ with conductor $p^{2r_p+1}$ that have 
the same $E_p$ and $\e_p$ as $\pi_p$.  This is used in \S\ref{invariance} to 
prove that $\dim S_k^{\new}(N;\pi_T)$ depends only on the Galois orbits of the local components
of $\pi_T$.  
It follows that the non-archimedean part of the relevant elliptic orbital integral depends only
on $N$, the fields $E_p$, and $\e_{\pi_T}$.
The trace of the Atkin--Lehner operator then provides an additional constraint that determines
the value of this integral, yielding \eqref{main}.   We remark that computing the trace of a Hecke 
  operator $T_n$ on $S_k^{\new}(N;\pi_T)$
  with $n>1$ will generally involve more than one elliptic orbital integral, 
  and so its determination would require more information.

Below we will discuss further context for Theorem \ref{dim}.

\subsection{Relation to root number bias}
For the levels that we consider, Theorem~\ref{dim} identifies more precisely where the local and global root number biases in \cite{M23,M25} arise.
  E.g., if $M = 1$, then we see that the global root number bias 
in \cite{M23} is only coming from the collection of ramified supercuspidals associated to the 
quadratic extensions $E_p/\Q_p$ which make $\Delta(\pi_T) = 1$.
We remark that under certain congruence conditions, one can also deduce this from considering 
the action of quadratic twists on these spaces (see \cite[\S 7]{M25}).

Further, from the perspective of the trace formula, the reason for the bias is simply that
 the local root number appears in the matrix coefficient for $\pi_p$. It factors out
of the relevant local orbital integral (see \eqref{eout} below), leading directly to the root number 
  $\e_{\pi_T}$ in \eqref{main}.

\subsection{Relation to Galois-invariant decompositions}\label{Galdec}

We have been discussing the decomposition of $S_k^\new(N)$ according to all possible local 
  components at $p \mid T$.  
However, for arithmetic investigations it is desirable to decompose $S_k^\new(N)$ according
to Galois orbits of newforms.  Given a Hecke eigenform $f(z)=\sum a_n q^n$ normalized 
  so that $a_n=1$, its Galois orbit is the set of newforms $f^\sigma(z)=\sum \sigma(a_n)q^n$
  for $\sigma\in \Aut(\C)$, or equivalently, $\Gal(\Qb/\Q)$. This action extends $\C$-linearly to
 a Galois action on $S_k^{\new}(N)$.  When $N=1$, Maeda's conjecture
  asserts that there is a single Galois orbit of newforms.

There is no apparent way to detect the Galois orbits of newforms
in $S_k^{\new}(N)$ directly via the trace formula.  
The best one can aim for is to decompose the space according to the Galois orbits 
of local representations $\pi_p$ at each place $p \mid N$.  This leads to a decomposition of
  $S_k^{\new}(N)$ in which each subspace summand is globally Galois-invariant, but not in general minimally so, i.e., each summand may contain multiple Galois orbits.
  However, it is expected that generically each summand is spanned by a single Galois orbit, at least after separating out non-minimal twists and CM forms.
(See, for example, \cite{LS,M21,CM,DPT} for this philosophy, if not this exact statement.)

Suppose $N$ is given by \eqref{N} as above.  For $p \ge 5$, we will show 
  in Proposition~\ref{prop:locgal}
that the $2(p-1)p^{r-1}$ supercuspidal representations of $\PGL_2(\Q_p)$ of conductor $p^{2r+1}$
  are partitioned into exactly four Galois orbits,  parametrized 
  by the pairs $(E_p,\e_p)$ giving the ramified quadratic extension 
  $E_p/\Q_p$ (which specifies the local inertial type) and the 
Atkin--Lehner sign $\e_p$.  One feature of Theorem  \ref{dim} is that the dimension depends
  only on such pairs, i.e., the local Galois orbits of the fixed components at the prime factors of $T$.  
  Thus one can reinterpret 
  the theorem as a formula for the dimension of the subspace of $S_k^\new(N)$ determined 
  by prescribing local Galois orbits for each $p \mid T$.  Namely, each local Galois orbit 
  of conductor $p^{2r+1}$ consists of $\frac{p-1}2 \cdot p^{r-1}$ supercuspidals, so one merely 
needs to multiply the dimension formula in Theorem~\ref{dim} by a product of factors of this form.

We explicate this in the simple case that $N = p^{2r+1}$ and $T=p$.  
\begin{corollary} 
\label{cor}
Let $k \ge 4$ be even, $p \ge 5$, $r \ge 1$, $E_p/\Q_p$ be a ramified quadratic extension, and $\varepsilon_p \in \{ \pm 1 \}$.  Write $S_k^\new(p^{2r+1}; E_p, \varepsilon_p)$ for the (Galois-invariant) subspace of $S_k^\new(p^{2r+1})$ spanned by newforms with local component $\pi_p$ dihedrally induced from $E_p$ with Atkin--Lehner sign $\varepsilon_p$.  Then
\begin{multline*}
\dim S_k^\new(p^{2r+1}; E_p, \varepsilon_p) = \\
\frac{k-1}{12}\left(\frac{p-1}2 \right)^2(p+1)p^{2(r-1)} 
+ {(-1)^{k/2}} \varepsilon_p \, \Delta(E_p) \, b_{p,0} \frac{(p-1) p^{r-1}}2 \, h(-p),
\end{multline*}
where  $\Delta(E_p) = 1$ if $E_p \simeq \Q_p(\sqrt{-p})$ and $0$ otherwise.
\end{corollary}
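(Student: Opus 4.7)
The plan is to derive the corollary directly from Theorem~\ref{dim} by decomposing $S_k^\new(p^{2r+1}; E_p, \varepsilon_p)$ as an orthogonal sum of the finer subspaces $S_k^\new(p^{2r+1}; \pi_p)$ attached to individual supercuspidals of the prescribed type, then applying \eqref{M1} (with $M = 1$ and $T = p$) to each summand.

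First, I would invoke Proposition~\ref{prop:locgal} to count the relevant local representations. For $p \ge 5$, that proposition identifies the set of supercuspidals of $\PGL_2(\Q_p)$ of conductor $p^{2r+1}$ with prescribed associated field $E_p$ and Atkin--Lehner sign $\varepsilon_p$ as a single Galois orbit of cardinality $\frac{p-1}{2} p^{r-1}$. By definition $S_k^\new(p^{2r+1}; E_p, \varepsilon_p)$ is the span of the newforms whose local component $\pi_p$ lies in this orbit, so \eqref{orth} yields the decomposition
$$S_k^\new(p^{2r+1}; E_p, \varepsilon_p) = \bigoplus_{\pi_p} S_k^\new(p^{2r+1}; \pi_p),$$
with $\pi_p$ ranging over the orbit. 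Next, I would observe that the right-hand side of \eqref{M1} depends on $\pi_p$ only through the pair $(E_p, \varepsilon_p)$: by definition $\Delta(\pi_p) = \Delta(E_p)$ and $\varepsilon(k,\pi_p) = (-1)^{k/2} \varepsilon_p$, while every other factor depends only on $k$, $p$, and $r$. Hence all $\frac{p-1}{2} p^{r-1}$ summands share the common value given by \eqref{M1}, and
$$\dim S_k^\new(p^{2r+1}; E_p, \varepsilon_p) = \tfrac{p-1}{2} p^{r-1} \cdot \dim S_k^\new(p^{2r+1}; \pi_p)$$
for any fixed $\pi_p$ in the orbit.

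The last step is a direct substitution and arithmetic simplification. Multiplying the main term in \eqref{M1} by $\frac{p-1}{2} p^{r-1}$ gives $\frac{k-1}{12} \cdot \frac{(p-1)^2(p+1)}{4} p^{2(r-1)} = \frac{k-1}{12} \left(\frac{p-1}{2}\right)^2 (p+1) p^{2(r-1)}$, which matches the main term in the corollary; multiplying the bias term in \eqref{M1} by the same factor produces $\Delta(E_p) (-1)^{k/2} \varepsilon_p \, b_{p,0} \, h(-p) \cdot \frac{p-1}{2} p^{r-1}$, matching the bias term of the corollary. There is no substantive obstacle here: once Proposition~\ref{prop:locgal} and Theorem~\ref{dim} are in hand, the corollary is pure bookkeeping, and the only things to verify carefully are the translations $\Delta(\pi_p) = \Delta(E_p)$ and $\varepsilon(k,\pi_p) = (-1)^{k/2} \varepsilon_p$, both of which are built into the statement of Theorem~\ref{dim}.
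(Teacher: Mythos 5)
Your proposal is correct and follows essentially the same route as the paper: the paper (in the paragraph preceding the Corollary) explains that one obtains the formula by multiplying \eqref{M1} by the size $\frac{p-1}{2}p^{r-1}$ of the local Galois orbit determined by $(E_p,\varepsilon_p)$, which is exactly the count furnished by Proposition~\ref{prop:locgal}, together with the orthogonal decomposition \eqref{orth} restricted to that orbit. Your arithmetic check of the main term ($\frac{p^2-1}{2}p^{r-1}\cdot\frac{p-1}{2}p^{r-1}=(\frac{p-1}{2})^2(p+1)p^{2(r-1)}$) and the bias term is accurate, and the hypotheses of Theorem~\ref{dim} ($T=p\ge 5$ odd, $M=1$, $3\nmid T$) are indeed satisfied.
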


\begin{remark} One can also deduce the $r=1$ case from \cite[Theorem 7.17]{K}.
\end{remark}

Note that a newform of level $p^{2r+1}$ must have a rationality field which contains 
\begin{equation}\label{Qz+}
\Q(\zeta_{p^r})^+=\Q(\zeta_{p^r}+\zeta_{p^r}^{-1})=\Q(\zeta_{p^r})\cap \R
\end{equation}
 for $\zeta_{p^r}$ a primitive $p^r$-th root of unity 
  (see \cite{M24} or Proposition~\ref{prop:locgal}), so each 
Galois-invariant space $S_k^\new(p^{2r+1}; E_p, \varepsilon_p)$ must have dimension 
a multiple of $\frac 12 \phi(p^r) = \frac{(p-1)p^{r-1}}2$.  This provides a simple sanity 
check on the corollary. 

Corollary~\ref{cor} often allows us to identify the local components $\pi_p$ (up to local Galois conjugacy) for global Galois orbits from the sizes of the Galois orbits together with the Atkin--Lehner signs.  This is considerably simpler than the algorithm presented in \cite{LW}.  (See also \cite{M24} for a partial analogue when $p=3$.)

\begin{example} When $k=4$ and $N = 1331 = 11^3$, Corollary~\ref{cor} says that
$\dim S_4^\new(11^3; E_p, \varepsilon_p)$ is $75$ if $E_p \simeq \Q_{11}(\sqrt{11})$ and $75 + \varepsilon_p 10$ if $E_p \simeq \Q_{11}(\sqrt{-11})$.
One can check in the \cite{LMFDB} that there are six Galois orbits of newforms in $S_4^\new(N)$.  They have sizes $5$, $5$, $60$, $75$, $75$, and $80$ and Atkin--Lehner signs $+1$, $-1$, $-1$, $-1$, $+1$ and $+1$, respectively.  
Necessarily, the two orbits of size $75$ have local components $\pi_{11}$ dihedrally induced 
from $\Q_{11}(\sqrt{11})$ and the other four orbits have $\pi_{11}$ dihedrally induced from 
$\Q_{11}(\sqrt{-11})$.

We remark that the two orbits of size $5$ each consist of CM forms, 
with CM by $\Q(\sqrt{-11})$.  Thus there are ten CM forms in $S_4^\new(11^3)$ and 
$10$ is precisely the size of the secondary term in Corollary~\ref{cor} when $\Delta(E_p) \ne 0$. 
There is a similar numerical coincidence whenever $p \equiv 3 \mod 4$.  So at first 
glance one might wonder whether the secondary term in  Corollary~\ref{cor} can at least 
partially be explained by the existence of CM forms.  However, since the two orbits of CM 
forms occur in spaces with opposite Atkin--Lehner signs, there does not seem to be a direct 
link.
Furthermore, CM forms do not occur in $S_k^\new(p^{2r+1})$ when $p \equiv 1 \mod 4$.  (Such a form would have to have CM by an imaginary quadratic field with discriminant dividing $p^{2r+1}$, but there are no such fields.)
\end{example}

\begin{example} Let $k=4$ and $N=3125 = 5^5$.  Here the newforms have not been computed in the LMFDB, 
but $\dim S^\new_4(5^5) = 600$ and we can compute the Hecke polynomial $h_2$ for $T_2$ acting on 
$S_4^{\new}(5^5)$ in Sage.  Since $h_2$ has distinct irreducible factors of degrees $140$, $150$, $150$ 
and $160$, these must be the sizes of the Galois orbits of newforms.  By Corollary~\ref{cor}, 
the orbit of size $150 \pm 10$ corresponds to a newform $f$ with local component $\pi_5$ 
dihedrally induced from $\Q_5(\sqrt 5) = \Q_5(\sqrt{-5})$ and Atkin--Lehner sign $\pm 1$.
\end{example}

\subsection{Other related work} \label{sec:related}
Several authors before have considered the problem of asymptotics for dimensions of 
newspaces with prescribed local ramified components or inertial types.  See for instance 
\cite{W} for prescribing arbitrary inertial types in the more general setting of Hilbert 
modular forms, and \cite{KST} for prescribing supercuspidal components for more 
general automorphic forms.  This amounts to determining the main term in the trace formula, which involves the formal degree.
Theorem~\ref{dim} shows that, at least in our setting, the exact dimension formula is quite simple,
with the asymptotic being in fact an equality much of the time.
 See also the introduction to \cite{K} for more discussion of such asymptotic formulas.
We discuss inertial types further at the end of \S\ref{Galorb}.

We also remark that the authors of \cite{DPT} considered the problem of existence of cusp forms with
 given components at the ramified places for sufficiently large weight.  For supercuspidal 
components, it is not too hard to deduce this from a simple trace formula.  
One consequence of our exact formula is an effective lower bound for weights where 
all ramified supercuspidals of a given conductor appear.  For instance if $p\ge 5$,
\eqref{M1} implies that all supercuspidals
$\pi_p$ of conductor $2r+1$ occur in $S_k^\new(p^{2r+1})$ for any even $k \ge 4$.  
(One can check it directly for small $p$ and apply the trivial bound $h(-p) < 2p$ for large $p$.)

\subsection*{Acknowledgments}
Support for this research was provided by an AMS-Simons Research Enhancement Grant
for Primarily Undergraduate Institution Faculty, to the first author.

\section{Supercuspidal representations of conductor $\p^{2r+1}$}\label{scmodel}

In this mostly expository section we recall Kutzko's construction of the 
unitary supercuspidal representations of $\GL_2(F)$ with odd-power conductor,  
for $F$ a $p$-adic field.
Any such representation is compactly induced from a character
of an appropriately-chosen open compact-mod-center subgroup.
We follow the description given in \cite[\S1]{Ku} and \cite[\S A.3.8]{H}
 (see also \cite[\S15,\S19]{BH}), making some of the details
more explicit for use later on.

  Let $p$ be a prime number, and let $F$ be a finite extension of $\Q_p$ with
ring of integers $\O$, maximal ideal $\p$, valuation $v$, and $q=|\O/\p|$. 
Fix once and for all a uniformizer $\varpi\in \p$ and a character
\[\psi:F\longrightarrow\C^\times\]
which is nontrivial on $\O$ but trivial on $\p$.
In this section only, we set $G=\GL_2(F)$, and write $Z$ for its center, so $Z\cong F^\times$.
This is also the only section in which we allow for a nontrivial central character $\w$.

Fix an integer $r\ge 1$, and let $n=2r+1\ge 3$.
The central character of a supercuspidal representation of $G$ of conductor $\p^n$ 
 has conductor dividing $\p^r$ (\cite[Prop. 3.4]{T}). 
 Fix such a character 
\[\w:F^\times\longrightarrow\C^\times\]
 trivial on $1+\p^r$.  

\begin{proposition}
For $n=2r+1$ and $\w$ as above, up to isomorphism there are exactly $2q^{r-1}(q-1)$ distinct supercuspidal representations
of $G$ having conductor $\p^n$ and central character $\w$.
\end{proposition}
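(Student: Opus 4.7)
The plan is to parametrize the supercuspidals in question by admissible pairs $(E/F,\theta)$, using the Kutzko-type inducing data reviewed in this section. I would proceed in three steps.

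First, I would invoke the fact that every supercuspidal $\pi$ of $G$ of conductor $\p^{2r+1}$ and central character $\w$ arises as $\pi=\cInd_J^G\Lambda_\theta$, where $E/F$ is a ramified quadratic extension, $J\supset F^\times$ is an explicit open compact-mod-center subgroup built from a hereditary order associated to $E$, and $\Lambda_\theta$ encodes a character $\theta:E^\times\to\C^\times$ restricting to $\w$ on $F^\times$. Since the relative different of the tame ramified extension $E/F$ has exponent $1$, the conductor formula $a(\pi)=1+a_E(\theta)$ turns the requirement $a(\pi)=2r+1$ into the condition that $\theta$ be trivial on $U_E^{2r}$ but not on $U_E^{2r-1}$. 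Moreover, two pairs $(E,\theta)$ and $(E',\theta')$ yield isomorphic supercuspidals iff $E=E'$ and $\theta'\in\{\theta,\theta^\sigma\}$, where $\sigma$ generates $\Gal(E/F)$; the fixed-point case $\theta=\theta^\sigma$ forces $\theta$ to factor through $N_{E/F}$ and then gives a reducible induction, so every supercuspidal corresponds to a Galois orbit of size exactly two.

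Second, for each of the two ramified quadratic extensions of $F$ (assuming $p$ odd), I would count the characters $\theta$ of $E^\times$ with exact conductor $2r$ extending $\w$. Since $U_E^{2r}\cap F^\times=U_E^{2r-1}\cap F^\times=1+\p^r$ and $\w$ is trivial on this subgroup (as its conductor divides $\p^r$), the set of such $\theta$ with conductor at most $2r$ is a torsor under the Pontryagin dual of $E^\times/F^\times U_E^{2r}$. Using $e(E/F)=2$ and $f(E/F)=1$, direct computation gives $|E^\times/F^\times U_E^{2r}|=2q^r$ and $|E^\times/F^\times U_E^{2r-1}|=2q^{r-1}$; subtraction yields $2q^{r-1}(q-1)$ characters of exact conductor $2r$.

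Finally, dividing by $2$ to account for the Galois action gives $q^{r-1}(q-1)$ supercuspidals per $E$, and summing over the two ramified quadratic extensions produces $2q^{r-1}(q-1)$ in total. The principal obstacle is the clean invocation of Kutzko's parametrization---in particular, verifying that supercuspidals arising from different $E$'s are never isomorphic, which I would check either from the rigidity of the inducing data or via the observation that the restriction to inertia of the Langlands parameter of $\pi$ recovers the quadratic character cutting out $E$.
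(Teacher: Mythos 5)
Your proof takes a genuinely different route from the paper's. The paper cites Tunnell's count of supercuspidals, which is carried out on the quaternion division algebra side via the Jacquet--Langlands correspondence: one counts irreducible representations of the finite quotient $D^\times/\langle\varpi\rangle U_D^i$ with prescribed central character, using the elementary identity $|G/Z|=\sum_{\omega_\pi|_Z=\omega}(\dim\pi)^2$. You instead count admissible pairs $(E,\theta)$ on the Galois/dihedral side and divide by the size of the $\Gal(E/F)$-orbit. Your arithmetic is correct: $[E^\times:F^\times U_E^{2r}]=2q^r$, $[E^\times:F^\times U_E^{2r-1}]=2q^{r-1}$, so each of the two ramified quadratic $E$ contributes $2q^{r-1}(q-1)$ characters of exact conductor $2r$ extending $\omega$, and dividing by $2$ and summing over $E$ recovers $2q^{r-1}(q-1)$.

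There are two caveats. First, your argument is restricted to $p$ odd, whereas the proposition is stated (and Kutzko's construction in this section is carried out) for arbitrary residue characteristic, including $p=2$. For $p=2$ there are more than two ramified quadratic extensions, the relative different $d_{E/F}$ exceeds $1$ so the conductor identity $a(\pi)=1+a_E(\theta)$ fails, and not every supercuspidal is dihedral; the quaternion-side count used in the paper sidesteps all of this. Second, even for $p$ odd your approach presupposes the exhaustion of supercuspidals by dihedral inductions. In the paper the logic actually runs in the opposite direction: Proposition 2.1 is established independently (via Tunnell), and it is then used together with the inequivalence of the compactly induced $\pi_{\chi_\lambda}$'s to \emph{deduce} exhaustion in Proposition 2.7. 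So the two approaches are complementary, but yours is neither as general nor as self-contained in the context of this section.
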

\begin{proof}
The case of trivial central character is explained in \cite[Theorem 3.9 and its remark]{T}. 
The proof of the general case is actually the same, in view of the following fact: for a finite group $G$
with $Z$ a subgroup of its center, and $\w$ any character of $Z$,
\[
|G/Z| = \sum_{\substack{\pi\in \Irr(G)\\ \w_\pi|_Z=\w}}(\dim \pi)^2,
\]
where $\w_\pi$ denotes the central character of $\pi$.  In the proof and notation of \cite[Cor. 3.6.1]{T},
this can be applied with $G=D^\times/\sg{\varpi}U_D^{i}$ and $Z=F^\times U_D^i/\sg{\varpi}U_D^i$, using
\[
|D^\times/F^\times U_D^i| =2(q+1)q^{2i-\lceil i/2\rceil-1}.\qedhere
\]
\end{proof}

Let $P=\mat \p\O\p\p$ and for $r\ge 1$ define the open compact subgroup
\begin{equation}\label{Ur}
U^r=1+P^r = \mat{1+\p^{s'}}{\p^s}{\p^{s+1}}{1+\p^{s'}},
\end{equation}
for $s=\lfloor \frac r2\rfloor=\lfloor \frac{n-1}4\rfloor$ and
$s'=\lceil \frac r 2\rceil = r-s=\begin{cases}s&\text{if $r$ is even}\\s+1&\text{if $r$ is odd.}\end{cases}\quad$
In \cite{BH}, this group is denoted $U_{\mathfrak A}^r$ for $\mathfrak A = \mathfrak {J}=\smat\O\O\p\O$. 
We have an isomorphism
\[U^{r}/U^{r+1}\longrightarrow (\O/\p)^2
\]
induced by
\begin{equation}\label{Ustep}
\mat{1+a\varpi^{s'}}{b\varpi^s}{c\varpi^{s+1}}{1+d\varpi^{s'}}\mapsto
\begin{cases}
(a,d)\mod \p&\text{if $r$ is even}
\\
 (b,c)\mod \p&\text{if $r$ is odd}.\end{cases}
\end{equation}

Since $\w$ is trivial on $1+\p^r$, it defines a character of 
$(U^r\cap Z)/(1+\p^r)=(1+\p^{s'})/(1+\p^r) \cong \O/\p^s$.  Hence there exists a unique 
\[\alpha=\alpha_\w \in \O/\p^s\]
such that
\begin{equation}\label{wh}
\w(1+\varpi^{s'}d) = \psi\Bigl(\frac{\alpha d}{\varpi^{s-1}}\Bigr)
\end{equation}
for all $d\in\O$.

In Proposition \ref{chiprop} below, we will attach a character
\[\chi=\chi_{t,\w}:U^r\longrightarrow\C^\times\]
to each $t\in\O^\times/(1+\p^{s'})$.
First we establish some notation.  Fix $t\in\O^\times$ and let
\begin{equation}\label{gchi}
g_\chi = \mat{0}t{\varpi}{\varpi \alpha}\in P
\end{equation}
for $\alpha$ as in \eqref{wh}.
The characteristic polynomial 
\begin{equation}\label{Pchi}
X^2-\varpi \alpha X-t\varpi
\end{equation}
of $g_\chi$ is irreducible over $F$ by Eisenstein's criterion, 
so $E=F[g_\chi]$ is a ramified quadratic extension of $F$.  
Notice that $g_\chi\in \O_E$ is a uniformizer since its norm is $\det g_\chi= -t\varpi$.
 Furthermore, by \cite[Prop. I.6.17]{Se}, the ring of integers of $E$ is given by
\[\O_E = \O +\O g_\chi.\]
The maximal ideal of $\O_E$ is
\[\P=\p+\O g_\chi = P\cap E.\] 
Using the fact that $g_\chi^2 = \varpi \alpha g_\chi+\varpi t$, we find by induction that for $\ell \ge0$,
\begin{equation}\label{Pell}
\P^\ell = \p^{\lceil\ell/2\rceil}+\p^{\lfloor \ell/2\rfloor}g_\chi,
\end{equation}
 so in particular
\begin{equation}\label{Pr}
\P^r =\p^{s'}+\p^sg_\chi.
\end{equation}

\begin{proposition}\label{chiprop}
For $t\in \O^\times$ as above and
\begin{equation}\label{k}
k=\mat {1+\varpi^{s'}a}{\varpi^sb}{\varpi^{s+1} c}{1+\varpi^{s'}d}\in U^r,\end{equation}
 define
\begin{equation}\label{chi}
\chi(k) = \psi\Bigl(\frac{\tr(g_\chi (k-1))}{\varpi^{r}}\Bigr)=
\psi\Bigl(\frac{\varpi^{s+1}(b+tc)+\varpi^{s'+1}\alpha d}{\varpi^{r}}\Bigr)=
\w(1+\varpi^{s'}d)\psi\Bigl(\frac{b+tc}{\varpi^{s'-1}}\Bigr).
\end{equation}
Then $\chi$ is a character of $U^r$ depending only on $t\mod 1+\p^{s'}$, with
\begin{equation}\label{kerchi}
U^{2r}\subsetneq \ker\chi.
\end{equation}
Furthermore, $\chi$ extends to a character of $Z U^r$ via $\chi|_{Z}=\w$.
Lastly, the element $g_\chi\in G(F)$ normalizes $U^r$ and
\begin{equation}\label{gkg}
\chi(g_\chi^{-1}xg_\chi)=\chi(x)
\end{equation}
for all $x\in ZU^r$. 
\end{proposition}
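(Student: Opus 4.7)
The plan is to prove each assertion by direct computation with the explicit formula, leveraging the fact that $g_\chi$ is a uniformizer of the ramified quadratic extension $E=F[g_\chi]$. I would first verify the three equalities in \eqref{chi} via matrix computation: multiplying $g_\chi$ by $k-1$ produces diagonal entries $t\varpi^{s+1}c$ and $\varpi^{s+1}b+\varpi^{s'+1}\alpha d$, so dividing the trace by $\varpi^r=\varpi^{s+s'}$ yields the middle expression. Splitting off the $\alpha d$ term and applying the defining relation \eqref{wh} for $\alpha$ recovers the third.

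To show that $\chi$ is a homomorphism on $U^r$, I write $k_i=1+x_i$ with $x_i\in P^r$, so $k_1k_2-1=x_1+x_2+x_1x_2$. The claim then reduces to $\psi(\tr(g_\chi x_1x_2)/\varpi^r)=1$ for all $x_i\in P^r$, i.e.\ $\tr(g_\chi P^{2r})\subseteq\p^{r+1}$. This is a direct matrix multiplication using $P^{2r}=\smat{\p^r}{\p^r}{\p^{r+1}}{\p^r}$ together with the entries of $g_\chi$: both diagonal entries of $g_\chi P^{2r}$ land in $\p^{r+1}$. The same computation immediately gives $U^{2r}\subseteq\ker\chi$; strictness in \eqref{kerchi} is witnessed by the explicit element $k=\smat{1}{-t}{\varpi}{1}\in U^r\setminus U^{r+1}$, for which $\chi(k)=\w(1)\psi(-t+t)=1$.

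For the extension to $ZU^r$ it suffices to verify $\chi|_{Z\cap U^r}=\w|_{Z\cap U^r}$, which is immediate from the explicit formula applied to the scalar matrix with $a=d=u$ and $b=c=0$, giving $\w(1+\varpi^{s'}u)$. To see that $g_\chi$ normalizes $U^r$, I would check the set equalities $g_\chi\mathfrak{J}=\mathfrak{J}g_\chi=P$ by direct matrix multiplication, where $\mathfrak{J}=\smat{\O}{\O}{\p}{\O}$; these imply $P^m=g_\chi^m\mathfrak{J}=\mathfrak{J}g_\chi^m$ for all $m\ge 0$, hence $g_\chi^{-1}P^rg_\chi=P^r$, and so $g_\chi^{-1}U^rg_\chi=U^r$. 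The invariance \eqref{gkg} on $U^r$ is then immediate from cyclicity of trace, since $\tr(g_\chi(g_\chi^{-1}kg_\chi-1))=\tr(kg_\chi-g_\chi)=\tr(g_\chi(k-1))$; on $Z$ it is automatic. Finally, replacing $t$ by $t'=t(1+\varpi^{s'}u)$ alters the formula by the factor $\psi((t'-t)c/\varpi^{s'-1})=\psi(tuc\varpi)=1$, so $\chi$ depends only on $t\mod 1+\p^{s'}$. The main obstacle is pure bookkeeping: the matrix multiplications behind $\tr(g_\chi P^{2r})\subseteq\p^{r+1}$ and $g_\chi\mathfrak{J}=\mathfrak{J}g_\chi$ must be carried out with attention to the parities of $r$, but no deeper input is needed.
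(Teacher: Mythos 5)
Your overall strategy mirrors the paper's and is essentially sound, but a couple of sub-arguments deserve comment.

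Your homomorphism argument is actually cleaner than the paper's: reducing the claim to $\tr(g_\chi P^{2r})\subset\p^{r+1}$ (a one-line lattice computation using $P^{2r}=\smat{\p^r}{\p^r}{\p^{r+1}}{\p^r}$) avoids the full matrix expansion of $kk'$ that the paper carries out, and simultaneously gives $U^{2r}\subset\ker\chi$ for free. Your normalization argument via $g_\chi\mathfrak{J}=\mathfrak{J}g_\chi=P$ is also correct; the paper instead directly computes $g_\chi^{-1}Pg_\chi=P$, which is marginally shorter but the two are morally equivalent. The conjugation-invariance via cyclicity of trace, the extension to $ZU^r$, and the dependence only on $t\bmod 1+\p^{s'}$ are handled the same way as in the paper.

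There is, however, one concrete error: your witness $k=\smat{1}{-t}{\varpi}{1}$ for the strictness of $U^{2r}\subsetneq\ker\chi$ does not lie in $U^r$ when $r\ge 2$. Membership in $U^r$ requires the $(1,2)$-entry to lie in $\p^s$ with $s=\lfloor r/2\rfloor$; since $t\in\O^\times$, we have $-t\notin\p^s$ once $s\ge 1$, i.e.\ once $r\ge 2$. (The paper's remark has the dual problem: its witness involves $\varpi^{s'-2}$ and so only makes sense for $r\ge 2$.) A clean uniform fix exploits the observation you already have, that \eqref{chi} does not depend on the entry $a$ at all: for $r\ge 2$ the element $\smat{1+\varpi^{s'}}{0}{0}{1}$ lies in $U^r\cap\ker\chi$ but not in $U^{2r}=\smat{1+\p^r}{\p^r}{\p^{r+1}}{1+\p^r}$ since $s'<r$; and for $r=1$ your $\smat{1}{-t}{\varpi}{1}$ does the job. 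With this patch, your proof is complete.
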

\begin{remark}
The group $U^{2r-1}=\mat{1+\p^r}{\p^{r-1}}{\p^r}{1+\p^r}$ contains $U^{2r}$ but
not $\ker\chi$. 
For example, the matrix $\mat1{\varpi^s\varpi^{s'-2}t}{-\varpi^{s+1}\varpi^{s'-2}}1\in U^r$ is not in $U^{2r-1}$ but
it belongs to $\ker\chi$.
\end{remark}

\begin{proof}
First we check that $\chi$ is a homomorphism.
For $k'=\mat {1+\varpi^{s'}a'}{\varpi^sb'}{\varpi^{s+1} c'}{1+\varpi^{s'}d'}\in U^r$,
\[kk'= \mat{1+\varpi^{s'}(a+a')+\varpi^{2s'}aa'+\varpi^{2s+1}bc'}
 {\varpi^s(b+b')+\varpi^r(a'b+b'd)}
{\varpi^{s+1}(c+c')+\varpi^{r+1}(a'c+dc')} {1+\varpi^{s'}(d+d')+\varpi^{2s'}dd'+\varpi^{2s+1}b'c}.\]
It follows that
\[\chi(kk')= \psi\Bigl(\frac{(b+b')+t(c+c')}{\varpi^{s'-1}}+\frac{\alpha(d+d')}{\varpi^{s-1}}\Bigr)
=\chi(k)\chi(k')\]
for $\alpha$ as in \eqref{wh}, as required.

Noting that
\[\mat{1+\p^{s'}}{\p^r}{\p^{r+1}}{1+\p^{r-v_\p(\alpha)}}\subset\ker\chi,\]
where $0\le v_\p(\alpha)\le s$, \eqref{kerchi} follows.

By \eqref{chi} (whose third equality comes from \eqref{wh}), $\chi(z)=\w(z)$ for $z\in Z\cap U^r$. 
We can therefore extend $\chi$ to a character of $ZU^r$.

Note that
\[g_\chi^{-1}P g_\chi = \mat{-\alpha/t}{1/\varpi}{1/t}0\mat\p\O\p\p\mat0t\varpi{\varpi \alpha}
=\mat\O\O\p\O\mat0t\varpi{\varpi \alpha}=P.\]
Consequently, $g_\chi$ normalizes $U^r=1+P^r$. Furthermore, for $k\in U^r$,
\[
\chi(g_\chi^{-1}kg_\chi)=\psi\Bigl(\frac{\tr((k-1)g_\chi)}{\varpi^{r}}\Bigr)
=\psi\Bigl(\frac{\tr(g_\chi(k-1))}{\varpi^{r}}\Bigr) = \chi(k),
\]
giving \eqref{gkg}.
\end{proof}

Henceforth we will view $\chi=\chi_{t,\w}$ as a character of $ZU^r$ as in the proposition.
Following \cite[Def. 1.5]{Ku}, let $\Lambda_{\chi}=\Lambda_{t,\w}$ be the set of characters
$\lambda$ of $E^\times$ that satisfy $\lambda|_{F^\times}=\w$ and whose restrictions to 
\begin{equation}\label{UEr}
E^\times\cap U^r=1+\p^{s'}+\p^sg_\chi=1+\P^r=:U_E^r
\end{equation}
 (see \eqref{Pr}) coincide with the restriction
of $\chi$ to this set.
In the simplest case where $n=2r+1=3$, $\Lambda_\chi=\{\chi\}$ is a singleton set.

Recall that the {\em level} of $\lambda$ is the smallest integer $k\ge 0$ such that
$\lambda$ is trivial on $U_E^{k+1}:=1+\P^{k+1}$.

\begin{lemma}\label{level}
Let $\lambda\in \Lambda_\chi$.   If $\P$ is odd, then $\lambda$ has level $n-2=2r-1$ and 
$\lambda$ determines $t$, and hence $\chi$.
If $\P\mid 2$, then the level of $\lambda$ is $\le n-3$ and $\lambda$ does not determine $\chi$.
\end{lemma}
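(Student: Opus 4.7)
The plan is to write $\chi$ down explicitly on $U_E^\ell$ for every $\ell \ge r$ and read both halves of the lemma off that formula. By \eqref{Pell}, every $k \in U_E^\ell$ has the shape
\[
k \;=\; 1 + \varpi^{\lceil \ell/2\rceil}\, x + \varpi^{\lfloor \ell/2\rfloor}\, y\, g_\chi \qquad (x,y \in \O).
\]
Expanding $k$ as a matrix via \eqref{gchi} and matching to \eqref{k} would give $a = \varpi^{\lceil \ell/2\rceil - s'}x$, $b = \varpi^{\lfloor \ell/2\rfloor - s}\,yt$, $c = \varpi^{\lfloor \ell/2\rfloor - s}\,y$, and $d = \varpi^{\lceil \ell/2\rceil - s'}x + \varpi^{\lfloor \ell/2\rfloor - s' + 1}\,y\alpha$. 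Plugging into \eqref{chi}, using $r = s+s'$, and noting $b + tc = 2ty\,\varpi^{\lfloor \ell/2\rfloor - s}$, the expression collapses to
\[
\chi(k) \;=\; \psi\!\left( 2ty\,\varpi^{\lfloor \ell/2\rfloor - r + 1} + \alpha x\,\varpi^{\lceil \ell/2\rceil - r + 1} + \alpha^2 y\,\varpi^{\lfloor \ell/2\rfloor - r + 2} \right).
\]

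For the level I would compare this formula at $\ell = 2r$ and $\ell = 2r-1$. At $\ell = 2r$ all three $\varpi$-exponents are $\ge 1$, so the argument of $\psi$ lies in $\p$ and $\chi \equiv 1$ on $U_E^{2r}$, independently of $\P$. At $\ell = 2r-1$ the exponents become $0,1,1$; dropping the terms in $\psi(\p)$ leaves $\psi(2ty)$. When $\P$ is odd, $2t \in \O^\times$, so $y \mapsto \psi(2ty)$ is a nontrivial character of $\O/\p$, hence $\chi|_{U_E^{2r-1}}$ (and therefore $\lambda|_{U_E^{2r-1}}$) is nontrivial, pinning the level to $2r-1 = n-2$. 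When $\P \mid 2$ instead $2t \in \p$, so $\psi(2ty) = 1$ and the full display is trivial on $U_E^{2r-1}$, giving the upper bound level $\le 2r - 2 = n - 3$.

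For the ``$\lambda$ determines $t$'' clause I would restrict the general formula to $\ell = r$ and $x = 0$, obtaining
\[
\chi(1 + \varpi^s\, y\, g_\chi) \;=\; \psi\!\left( \frac{y\,(2t + \varpi \alpha^2)}{\varpi^{s'-1}} \right), \qquad y \in \O.
\]
By the standard duality between $\O/\p^{s'}$ and the additive characters of $\O$ with conductor $\p^{s'}$ (using that $\psi$ has conductor $\O$), this character determines $2t + \varpi\alpha^2$ modulo $\p^{s'}$; since $\alpha$ is already recovered from $\w$ via \eqref{wh}, in the $\P$ odd case $2 \in \O^\times$ yields $t \bmod \p^{s'}$, i.e., the class of $t$ in $\O^\times/(1+\p^{s'})$, which by Proposition \ref{chiprop} determines $\chi$. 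When $\P \mid 2$, the factor of $2$ absorbs $v(2) \ge 1$ powers of $\varpi$, so only $t \bmod \p^{s'-v(2)}$ is recovered; any two choices of $t$ agreeing modulo $\p^{s'-v(2)}$ but differing modulo $\p^{s'}$ give the same restriction $\chi|_{U_E^r}$ (the $x$-dependent term in the general formula does not involve $t$), hence the same set $\Lambda_{t,\w}$, so $\lambda$ cannot distinguish them.

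The main obstacle is the bookkeeping in the first display: once the three $\varpi$-exponents are correctly in hand, both halves of the lemma reduce to comparing those exponents against $v(2)$.
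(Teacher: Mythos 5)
Your proof is correct and follows essentially the same route as the paper: both arguments expand elements of $U_E^\ell$ via \eqref{Pell}, plug into \eqref{chi}, and observe that the $t$-dependence enters only through the term $\psi(2ty\,\varpi^{\ast})$, whose triviality or nontriviality hinges on whether $2\in\O^\times$. Your contribution of a single closed-form expression for $\chi$ on $U_E^\ell$ for all $\ell\ge r$ is a mild organizational improvement, but the specializations $\ell=2r-1$ (for the level) and $\ell=r$, $x=0$ (for recovering $t$) reproduce exactly the two computations the paper carries out; your use of duality between $\O/\p^{s'}$ and characters of conductor $\p^{s'}$ to pin down $t$ is just a spelled-out version of the paper's ``$\lambda$ determines $t\in\O^\times/(1+\p^{s'})$.''
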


\begin{proof}
By \eqref{Pell},
\[\P^{n-2}=\P^{2r-1}=\p^r+\p^{r-1}g_\chi.\]
Since $r\ge 1$, $n-2=2r-1\ge r$, so $1+\P^{n-2}\subset 1+\P^r\subset U^r$.  Thus for $a,c\in\O$, 
\[\lambda(1+a\varpi^r+c\varpi^{r-1}g_\chi)=
\chi(\mat{1+a\varpi^r}{ct\varpi^{r-1}}{c\varpi^r}{1+a\varpi^r+c\varpi^r\alpha})
=\psi(2tc)\]
using the fact that $\w$ is trivial on $1+\p^r$.
If $\P$ is odd, this is a nontrivial function of $c$, so $\lambda$ is nontrivial on $1+\P^{n-2}$. 
On the other hand, by \eqref{kerchi}, $\lambda$ is trivial on $U_E^{n-1}\subset U^{n-1}=U^{2r}$. 
Thus $\lambda$ has level $n-2$.  
If $\P\mid 2$, then $\psi(2tc)=1$ and so $\lambda$ is trivial on $U_E^{n-2}$.

Similarly, for any $b\in \O$, $1+b\varpi^s g_\chi\in U_E^r$ by
\eqref{UEr}, and
\[\lambda(1+b\varpi^sg_\chi) = \chi(\mat{1}{bt\varpi^s}{b\varpi^s+1}{1+b\varpi^{s+1}\alpha})
  =\w(1+b\varpi^{s+1}\alpha)\psi(\frac{2bt}{\varpi^{s'-1}}).\]
Thus, given the fixed central character $\w$, $\lambda$ determines $t\in \O^\times/(1+\p^{s'})$ when
$\p$ is odd, but $t$ is only determined modulo $1+\p^{s'-v_\p(2)}$ if $\p\mid 2$.
\end{proof}

Fix $\lambda\in \Lambda_{\chi}$ and consider the restriction $\lambda|_{\O_E^\times}$.
By \eqref{kerchi}, it may be viewed
as a character of the finite group
\begin{equation}\label{lgroup}
\O_E^\times/U_E^{n-1} \cong \mu_{q-1}\times (U_E^1/U_E^{n-1}),
\end{equation}
where $\mu_{q-1}\subset \O^\times$ consists of the $(q-1)$-st roots of unity.
(Since $E/F$ is ramified, they both have the same residue degree $q$.)
An explicit parametrization of $\Lambda_{\chi,\w}$ could be given using the structure of the above
abelian group, given in \cite{nakagoshi}.

In general, if $G$ is a finite abelian group with a subgroup $H$, then restricting
characters gives a surjective homomorphism 
\[\Res:\widehat{G}\longrightarrow\widehat{H}\]
of the dual groups.  Thus each character $\chi\in \widehat{H}$ has exactly $|G/H|$ distinct
 extensions to $G$.
In our situation, the given character $\chi$ (restricted to $F^\times U_E^r$) has
\[|\O_E^\times/\O^\times U_E^r|\]
extensions to $\O_E^\times$. 
Noting that $\P^r\cap \O = \p^{s'}$ as in \eqref{Pr}, we find
\begin{equation}\label{OEindex}
|\O_E^\times/\O^\times U_E^r|=\frac{|\O_E/\P^{r-1}|}{|\O/\p^{s'-1}|} = q^{r-1}/q^{s'-1}=q^s.
\end{equation}

Finally, given an extension $\lambda$ of $\chi$ to $\O_E^\times$ as above, 
it can be extended to $E^\times$ by defining it on the prime element $g_\chi$.
In view of \eqref{Pchi}, we must have
\[\lambda(g_\chi)^2 = \lambda(\varpi t+\varpi \alpha g_\chi)=\w(\varpi)\lambda(t+\alpha g_\chi).\]
Both factors on the right-hand side are defined, since $t+\alpha g_\chi\in \O_E^\times$.
There are thus two choices for $\lambda(g_\chi)\in\C$.
This proves the following.

\begin{proposition}\label{Lw}
  Having fixed $\w$, there are $q^{s'-1}(q-1)$ characters $\chi$ as in
\eqref{chi}, corresponding to the set of $t\in \O^\times/(1+\p^{s'})$.  For each such $\chi$,
\[|\Lambda_{\chi}|=2q^s.\]
Consequently, $\Lambda_{\w,r} := \bigcup_\chi \Lambda_{\chi}=\bigcup_t\Lambda_{t,\w}$ 
  has $2q^{r-1}(q-1)$ elements.
\end{proposition}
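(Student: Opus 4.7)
The plan is to assemble the proposition's three counts from pieces already established in the preceding discussion, so the proof is essentially a bookkeeping exercise.

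First, I would count the characters $\chi = \chi_{t,\w}$. By Proposition \ref{chiprop}, $\chi$ depends only on $t \mod 1+\p^{s'}$, and conversely Lemma \ref{level} (the second paragraph) shows distinct cosets of $1+\p^{s'}$ in $\O^\times$ give distinct $\chi$'s at least when $\p$ is odd; for the count we only need that the map $t \mapsto \chi_{t,\w}$ is well-defined on $\O^\times/(1+\p^{s'})$ and injective, which follows from the explicit formula \eqref{chi} (the coefficient of $c$ in the exponent of $\psi$ determines $t$ modulo $1+\p^{s'}$). Hence the number of such $\chi$ is $|\O^\times/(1+\p^{s'})| = (q-1)q^{s'-1}$.

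Next, for a fixed $\chi$, I would count $|\Lambda_\chi|$ in two stages. Stage one: extend $\chi|_{F^\times U_E^r}$ (equivalently, the character of $\O^\times U_E^r$ that agrees with $\w$ on $\O^\times$ and with $\chi$ on $U_E^r$) to $\O_E^\times$. Since $\O_E^\times$ and $\O^\times U_E^r$ are finite abelian groups, restriction of characters is surjective, so the number of extensions is the index $|\O_E^\times/\O^\times U_E^r| = q^s$ computed in \eqref{OEindex}. Stage two: extend from $\O_E^\times$ to $E^\times$ by choosing the value $\lambda(g_\chi)$. Using the fact that $g_\chi$ is a uniformizer of $\O_E$, every element of $E^\times$ is uniquely of the form $g_\chi^n u$ with $u \in \O_E^\times$, so a character extending $\lambda|_{\O_E^\times}$ is freely determined by $\lambda(g_\chi) \in \C^\times$, subject to the constraint $\lambda|_{F^\times}=\w$. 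Because $g_\chi^2 = \varpi t + \varpi\alpha g_\chi = \varpi(t + \alpha g_\chi)$ with $t+\alpha g_\chi \in \O_E^\times$, this constraint becomes the quadratic equation
\[
\lambda(g_\chi)^2 = \w(\varpi)\,\lambda(t+\alpha g_\chi),
\]
which has exactly two solutions in $\C^\times$. Combining the two stages gives $|\Lambda_\chi| = 2q^s$.

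Finally, I would multiply the two counts. Since the sets $\Lambda_{t,\w}$ are disjoint (as $\lambda|_{U_E^r}$ recovers $\chi$, and $\chi$ determines the $t$-coset), summing gives
\[
|\Lambda_{\w,r}| = (q-1)q^{s'-1}\cdot 2q^s = 2(q-1)q^{s+s'-1} = 2(q-1)q^{r-1},
\]
using $s+s'=r$. The only step that is not essentially immediate from the preceding material is the verification in stage two that the quadratic equation truly has two solutions and that each gives a well-defined character of $E^\times$; that will be the main (modest) obstacle, and it is handled by the remark above that $E^\times \cong g_\chi^{\Z} \cdot \O_E^\times$ and that $\varpi = g_\chi^2/(t+\alpha g_\chi)$ is the only relation needing to be checked between $g_\chi$ and $\O_E^\times$ once $\lambda|_{F^\times}=\w$ is imposed.
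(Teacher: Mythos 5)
Your proof is correct and follows essentially the same route as the paper's (largely implicit) argument: the $\chi$'s are counted by $|\O^\times/(1+\p^{s'})| = q^{s'-1}(q-1)$, the count $|\Lambda_\chi|=2q^s$ arises from the index $|\O_E^\times/\O^\times U_E^r|=q^s$ of \eqref{OEindex} followed by the two choices for $\lambda(g_\chi)$ forced by $\lambda(g_\chi)^2=\w(\varpi)\lambda(t+\alpha g_\chi)$, and the totals multiply. One minor remark: your justification that the sets $\Lambda_{t,\w}$ are disjoint leans on Lemma \ref{level}, whose conclusion ($\lambda$ determines $t$) holds only when $\p$ is odd; but disjointness is automatic in every case, since for distinct cosets $t$ the elements $g_\chi$ generate distinct subalgebras $E=F[g_\chi]$ of the $2\times 2$ matrices, so the $\Lambda_{t,\w}$ consist of characters of literally different groups $E^\times$ (or, if one prefers, one is counting pairs $(t,\lambda)$).
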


Now fix $\chi$ and define
\begin{equation}\label{Jn}
J_{E,r} = E^\times U^r. 
\end{equation}
It is an open subgroup of $G$ containing, and compact modulo, $Z$.
For $\lambda\in \Lambda_{\chi}$, we may extend
$\chi$ to a character of $J_{E,r}$ by
\[\chi_\lambda(x k) = \lambda(x)\chi(k)\]
for $x\in E^\times$ and $k\in U^r$. 
We then define the compactly induced representation
\[\pi_{\chi_\lambda} =\cInd_{J_{E,r}}^G(\chi_\lambda).\]
In view of the fact (Lemma \ref{level}) that $\lambda$ determines $\chi$ when $\p$ is odd,
in such cases, we can write $\pi_\lambda$ instead of $\pi_{\chi_\lambda}$.

\begin{proposition}\label{2main}
For $\chi_\lambda$ as above,
$\pi_{\chi_\lambda}$ (or simply $\pi_\lambda$ if $\p$ is odd) is irreducible and 
  supercuspidal of conductor $\p^n$, where $n=2r+1$.
The $2q^{r-1}(q-1)$ representations $\pi_{\chi_\lambda}$
thus obtained are mutually inequivalent, so they comprise the set of all supercuspidals 
of conductor $\p^n$ and central character $\w$.
The new vector of $\pi_{\chi_\lambda}$ is supported on the double coset
\begin{equation}\label{JpK}
J_{E,r} \mat{\varpi^r}{}{}1 K_1(\p^n),\end{equation}
where $K_1(\p^n)=\mat{\O^\times}\O{\p^n}{1+\p^n}$.
When $\w$ is trivial, the root number of $\pi_{\chi_\lambda}$ is 
\begin{equation}\label{epi}
\e=\lambda(g_\chi)\in \{\pm 1\}.
\end{equation}
\end{proposition}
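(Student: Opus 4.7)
The plan is to establish the four assertions---irreducibility and supercuspidality with conductor $\p^n$, mutual inequivalence with exhaustion, new-vector support, and the root-number identity---by combining Kutzko's intertwining criterion with a Mackey double-coset calculation. For the first assertion, I would apply the standard irreducibility criterion for compact induction from an open subgroup that is compact modulo the center: $\pi_{\chi_\lambda}=\cInd_{J_{E,r}}^G \chi_\lambda$ is irreducible supercuspidal if and only if the intertwining set $I_G(\chi_\lambda)=\{g\in G:\Hom_{J_{E,r}\cap gJ_{E,r}g^{-1}}(\chi_\lambda,{}^g\chi_\lambda)\neq 0\}$ coincides with $J_{E,r}$. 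By \eqref{gkg} the element $g_\chi$ already intertwines $\chi|_{U^r}$ and lies in $E^\times\subset J_{E,r}$, so what needs proving is that for every $g\in G\setminus J_{E,r}$ the $\Hom$ space vanishes. Using Iwasawa-type coset representatives $\smat{\varpi^a}{}{}{1}$ for $J_{E,r}\backslash G / J_{E,r}$ and comparing $\chi_\lambda$ with its $g$-conjugate on a sufficiently deep subgroup $U^m$ of $J_{E,r}\cap gJ_{E,r}g^{-1}$, one forces a contradiction unless $g\in J_{E,r}$. Supercuspidality is then automatic from compact support modulo $Z$.

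To pin down the conductor and the support of the new vector simultaneously, I would compute $\dim \pi_{\chi_\lambda}^{K_1(\p^n)}$ via the Mackey decomposition
\[
\pi_{\chi_\lambda}^{K_1(\p^n)} \;\cong\; \bigoplus_g \Hom_{J_{E,r}\cap gK_1(\p^n)g^{-1}}(\chi_\lambda,\mathbf{1}),
\]
the sum taken over $J_{E,r}\backslash G/K_1(\p^n)$. Choosing coset representatives along the lines of \cite[\S A.3.8]{H} and using \eqref{kerchi}, one checks that only $g=\smat{\varpi^r}{}{}{1}$ contributes, and that its contribution is one-dimensional. This gives conductor exactly $\p^n$ (no nonzero $K_1(\p^{n-1})$-fixed vector can arise by the same kind of check) and also yields the support identification \eqref{JpK}. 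Mutual inequivalence follows by applying the same intertwining calculation to two distinct pairs $\chi_{\lambda},\chi_{\lambda'}$: a nonzero intertwiner would force the characters to agree on the level-$r$ subgroup, contradicting the parametrization in Proposition~\ref{Lw}. Exhaustion is then a counting check: by Proposition~\ref{Lw} the list of $\pi_{\chi_\lambda}$ has $2q^{r-1}(q-1)$ members, matching the count in the proposition at the start of \S\ref{scmodel}.

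The last assertion, $\e=\lambda(g_\chi)$ when $\w$ is trivial, is the step I expect to require the most care. Self-contragredience (a consequence of $\w$ trivial together with $\lambda|_{F^\times}=1$) forces $\e\in\{\pm 1\}$, and the identity $\lambda(g_\chi)^2=\lambda(-t\varpi)=\w(-t\varpi)=1$ coming from \eqref{Pchi} shows $\lambda(g_\chi)$ is also a sign; the task is to identify the two signs. The cleanest route is to apply the local Langlands correspondence, which sends $\pi_{\chi_\lambda}$ to an induced Weil-group representation $\Ind_{W_E}^{W_F}\widetilde\lambda$ for a suitably rectified character $\widetilde\lambda$, and then to expand the Deligne--Langlands $\e$-factor via the inductivity and twisting formulas, ultimately isolating the contribution $\lambda(g_\chi)$ from the chosen uniformizer of $E$. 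Alternatively, one can argue directly from the functional equation of the local zeta integral of the new vector: the support result \eqref{JpK} localizes the integral at the coset of $\smat{\varpi^r}{}{}{1}$, and $\lambda(g_\chi)$ emerges as the constant in the functional equation. The main obstacle here is bookkeeping of normalizations (the uniformizer $g_\chi$ vs.\ $\varpi$, transfer factors, and the chosen additive character $\psi$); the structural identity $\lambda(g_\chi)^2=1$ guarantees no worse discrepancy than a sign can appear, but ruling out that sign is exactly the delicate point.
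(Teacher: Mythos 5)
Your proposal is correct in outline but takes a noticeably harder route than the paper in two places, and in particular the step you flag as the delicate one admits a much shorter treatment.

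For irreducibility, supercuspidality, and inequivalence, the paper does not redo the intertwining calculation: it simply cites Kutzko (Props.\ 1.7 and 2.9 of \cite{Ku}) and \cite[\S 15]{BH}, where those arguments are carried out. Your plan to reprove intertwining from scratch is fine but duplicates substantial existing material. Similarly, for the conductor and new-vector support the paper does not run a full Mackey analysis of $J_{E,r}\backslash G/K_1(\p^n)$; it checks directly that the formula for a candidate $K_1(\p^n)$-invariant function supported on $J_{E,r}\smat{\varpi^r}{}{}1K_1(\p^n)$ is well defined (i.e., $\chi_\lambda(h_1)=\chi_\lambda(h_2)$ whenever $h_1\smat{\varpi^r}{}{}{1}g_1=h_2\smat{\varpi^r}{}{}{1}g_2$, reducing this to \eqref{chi}), and then pins down the exact conductor by the count from the earlier proposition together with \cite[Prop.\ 3.5]{T} and induction on $r$. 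Your Mackey route would work and would simultaneously nail the exact conductor, but it would require enumerating all double cosets rather than verifying one.

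The main divergence is the root number. You propose either the Deligne--Langlands $\e$-factor inductivity route or a local zeta-integral functional equation, and you correctly worry that both demand careful normalization bookkeeping. The paper sidesteps all of that. Since $\w$ is trivial, $\alpha=0$ and $g_\chi=\smat{}t\varpi{}$; with $\phi$ the newvector normalized by $\phi(\smat{\varpi^r}{}{}1)=1$, Schmidt's identity $\pi(\smat{}1{\varpi^n}{})\phi=\e\phi$ gives, after the matrix factorization
\[\mat{}{\varpi^r}{\varpi^n}{}=\mat{}t{\varpi}{}\mat{\varpi^r/t}{}{}{\varpi^r/t}\mat{\varpi^r}{}{}1\mat t{}{}1,\]
the chain $\e=\phi\bigl(\smat{}{\varpi^r}{\varpi^n}{}\bigr)=\chi_\lambda(g_\chi)\cdot\w(\varpi^r/t)\cdot\phi(\smat{\varpi^r}{}{}{1})=\lambda(g_\chi)$, using that $\smat t{}{}1\in K_1(\p^n)$ and $\w$ is trivial. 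So once the support \eqref{JpK} is established, the root number falls out of a one-line matrix identity with no appeal to the local Langlands correspondence, $\e$-factor formalism, or zeta integrals. This is exactly the kind of ``cleanest route'' you were hoping for, and it avoids the transfer-factor and additive-character bookkeeping you were (rightly) concerned about.
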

\begin{remark}  In the notation of \cite[\S15]{BH}, $\pi_{\chi_\lambda}$ is the 
representation attached to the cuspidal type $(\mathcal{J}, 2r-1, \varpi^{-r}g_\chi)$. 
\end{remark}

\begin{proof}
Irreducibility and supercuspidality are proven in \cite[Prop. 1.7]{Ku}, with inequivalence proven
in \cite[Prop. 2.9]{Ku}. See also \cite[\S15]{BH}.

We will verify momentarily that $\pi_{\chi_\lambda}$ has a $K_1(\p^{2r+1})$-fixed vector, so that
the conductor divides $\p^{2r+1}$.
Using the fact \cite[Prop. 3.5]{T} that $E/F$ is ramified if and only if the conductor exponent is odd,
along with the count of supercuspidals of a given conductor and central character, 
it follows by induction on $r$ that
the conductor of $\pi_{\chi_\lambda}$ is exactly $\p^{2r+1}$.

  In order to show that $\pi_{\chi_\lambda}$ contains a well-defined $K_1(\p^n)$-invariant
function on the double coset \eqref{JpK}, we need to show that 
$\chi_\lambda(h_1)= \chi_\lambda(h_2)$ whenever
\begin{equation}\label{hi}
h_1\smat{\varpi^r}{}{}1 g_1 = h_2\smat{\varpi^r}{}{}1g_2
\end{equation}
for some $h_1,h_2\in J_{E,r}$ and $g_1,g_2\in K_1(\p^n)$.
Write $h_i=g_\chi^{d_i}z_ik_i$ for $d_i\in\{0,1\}$, $k_i\in U^r$, and $z_i\in Z$.
  From the valuation of the determinant
in \eqref{hi} we conclude that $d_1=d_2$ and $z_2^{-1}z_1\in \O^\times$. 
  So without loss of generality we can assume that $h_i=z_ik_i$ with $z_i\in\O^\times$.
We may then write
\[k:=z_2^{-1}z_1k_2^{-1}k_1 = \mat{\varpi^r}{}{}1\mat ab{\varpi^n c}{1+\varpi^n d}\mat{\varpi^{-r}}{}{}1
=\mat a{b\varpi^r}
{c\varpi^{r+1}}{1+\varpi^n d}\]
where $\smat ab{\varpi^n c}{1+\varpi^n d}=g_2g_1^{-1}\in K_1(\p^n)$.
As $k_2^{-1}k_1\in U^r$, the lower right entry of $k$ belongs to $z_2^{-1}z_1+\p^{s'}$.  But this entry 
also equals $1+\varpi^n d$. It follows that $z_2^{-1}z_1$, and hence also $k$,
 belongs to $U^r$.  Therefore we can evaluate $\chi_\lambda(k)=\chi(k)$ using \eqref{chi}, giving
\[\chi(k)= \psi\Bigl(\frac{b\varpi^{s'}+tc\varpi^{s'}}{\varpi^{s'-1}}
  +\frac{\alpha\varpi^{n-s'}d}{\varpi^{s-1}}\Bigr) =1,\]
as required.

Now assume $\w$ is trivial,  so $\alpha=0$ and $g_\chi=\smat{}t\varpi{}$. 
Let $\phi$ be the newvector of $\pi=\pi_{\chi_\lambda}$
 satisfying $\phi(\smat{\varpi^r}{}{}1)=1$.  Then
\[\pi(\mat{}1{\varpi^n}{})\phi = \e\phi\]
for the root number $\e$ of $\pi$, \cite[Thm 3.2.2]{Sch}.  So 
\begin{align*}
\e&= \Bigl[\pi(\mat{}1{\varpi^n}{})\phi\Bigr](\mat{\varpi^r}{}{}1) 
= \phi(\mat{}{\varpi^r}{\varpi^n}{})\\
&=\phi(\mat{}t{\varpi}{}\mat{\varpi^r/t}{}{}{\varpi^r/t}\mat{\varpi^r}{}{}1\mat t{}{}1)
=\chi_\lambda(g_\chi)=\lambda(g_\chi).
\end{align*}
Note that $\lambda(g_\chi)^2=\lambda(t\varpi) = \w(t\varpi)=1$ since $\w$ is trivial.
\end{proof}

\section{Local Galois orbits}\label{orbits}

We continue the local setup and notation of the previous section.  In particular, $F$ is a $p$-adic field.

\subsection{Galois action}\label{Galact}

The automorphism group of $\C$ acts on complex representations of a group by automorphisms of 
the coefficients.  This action is given in detail as follows.
For $V$ a complex vector space and $\sigma\in \Aut(\C)$, let $V^\sigma$ denote
the vector space whose underlying group is $V$, but with scalar multiplication given by 
$a\cdot v= \sigma^{-1}(a)v$.  If $G$ is a group and $\pi:G\to\GL(V)$ is a representation,
we let $\pi^\sigma$ denote the representation of $G$ on $V^\sigma$ defined by 
  $\pi^\sigma(g)\cdot v=\pi(g)v$.
We say that a representation $\pi' : G \to \GL(V')$ is in the \emph{Galois orbit} 
of $\pi$ if $\pi' \simeq \pi^\sigma$ for some $\sigma \in  \Aut(\C)$.

If $\sg{v,w}$ is the canonical bilinear pairing on $V\times V^*$, then 
$(V^\sigma)^*$ may be identified as a set
with $V^*$, with the pairing on $V^\sigma\times (V^\sigma)^*$ given by 
\[\sg{v,w}_\sigma := \sigma(\sg{v,w}).\]
For example, $\sg{\lambda\cdot v,w}_\sigma = \sigma(\sg{\sigma^{-1}(\lambda)v,w})
  =\lambda\sg{v,w}_\sigma$.
Furthermore, if $\phi(g)=\sg{\pi(g)v,w}$ is a matrix
coefficient for $\pi$, then $\sigma(\phi(g))=\sg{\pi^\sigma(g)v,w}_\sigma$ is the corresponding
matrix coefficient for $\pi^\sigma$.
In particular, if $V=\C$ and $\chi$ is a character of $G$, then
$\chi^\sigma(g)=\sigma(\chi(g))$. 

If $\pi=\Ind_H^G(\tau)$ for a representation $(\tau,W)$ of a subgroup $H$ of $G$, then it follows
immediately from the definitions that 
\begin{equation}\label{GaloisInd}
\pi^\sigma=\Ind_H^G(\tau^\sigma).
\end{equation}
 One corollary of this observation is the following.

\begin{proposition}\label{GaloisSatake}
Suppose $\pi$ is an unramified principal series representation of $\PGL_2(F)$ with Satake 
parameters $\{\alpha,\alpha^{-1}\}$.  Then $\pi^\sigma$ is the unramified representation with
Satake parameters $\{\sigma(\alpha),\sigma(\alpha)^{-1}\}$.
\end{proposition}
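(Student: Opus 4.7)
The plan is to realize $\pi$ as an induced representation, invoke equation \eqref{GaloisInd}, and read off the Satake parameters of the result. Fixing the standard Borel $B \subset \PGL_2(F)$, I would write $\pi = \Ind_B^G \chi$ for an unramified character $\chi$ of $B$ (well-defined up to the Weyl involution) whose restriction to the torus encodes the Satake parameter $\alpha$: concretely, $\chi$ is trivial on the unipotent radical and on $T(\O)$, and $\chi\smat{\varpi}{}{}{}{1} = \alpha$ in the normalization that recovers the Satake pair $\{\alpha, \alpha^{-1}\}$. Equation \eqref{GaloisInd} then gives $\pi^\sigma \simeq \Ind_B^G \chi^\sigma$.

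Next I would unpack $\chi^\sigma$. By the description of the Galois action on characters in \S\ref{Galact}, $\chi^\sigma$ is the character $g \mapsto \sigma(\chi(g))$. It is still unramified, since triviality of $\chi$ on $T(\O) \ltimes N(F)$ is preserved by $\sigma$ (which fixes $1 \in \C^\times$), and $\chi^\sigma\smat{\varpi}{}{}{}{1} = \sigma(\alpha)$. Hence $\chi^\sigma$ corresponds to Satake parameter $\sigma(\alpha)$, and $\pi^\sigma$ is the unramified principal series with Satake parameters $\{\sigma(\alpha),\sigma(\alpha)^{-1}\}$.

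The main technical point to keep track of is that standard normalization conventions introduce factors of $q^{1/2}$ (e.g., in the modulus character $\delta^{1/2}$ of normalized induction) which are not $\Aut(\C)$-invariant. A clean way around this is to translate the claim into Hecke eigenvalues: if $\phi_0$ is the spherical vector of $\pi$ and a spherical Hecke operator $T$ satisfies $T\phi_0 = \lambda \phi_0$ in $V$, then in $V^\sigma$ the same computation gives $T\phi_0 = \sigma(\lambda)\cdot_\sigma \phi_0$ by the definition of the $\sigma$-twisted scalar multiplication in \S\ref{Galact}. Since the elementary symmetric functions $\alpha+\alpha^{-1}$ and $\alpha\cdot\alpha^{-1}=1$ of the Satake pair are determined by such Hecke eigenvalues (via the Satake isomorphism), this forces the Satake pair of $\pi^\sigma$ to equal $\{\sigma(\alpha),\sigma(\alpha)^{-1}\}$, independent of any $q^{1/2}$ choice.
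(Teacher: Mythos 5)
Your first paragraph is precisely the paper's (implicit) proof: Proposition~\ref{GaloisSatake} is stated as a corollary of \eqref{GaloisInd}, obtained by realizing $\pi$ as induced from the Borel and carrying the inducing character through $\sigma$.

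Your second paragraph correctly flags a subtlety that the paper leaves unaddressed, but the proposed fix does not quite close it. You are right that the integer-valued spherical Hecke operator has eigenvalue $\lambda = q^{1/2}(\alpha+\alpha^{-1})$ on $\pi$ and eigenvalue $\sigma(\lambda)$ on $\pi^\sigma$ (by the twisted-scalar computation you give). But recovering the Satake pair of $\pi^\sigma$ from $\sigma(\lambda)$ still requires dividing by $q^{1/2}$, and
\[
\frac{\sigma(\lambda)}{q^{1/2}} \;=\; \frac{\sigma(q^{1/2})}{q^{1/2}}\bigl(\sigma(\alpha)+\sigma(\alpha)^{-1}\bigr),
\]
so when $\sigma(\sqrt q)=-\sqrt q$ the Satake pair of $\pi^\sigma$ comes out as $\{-\sigma(\alpha),-\sigma(\alpha)^{-1}\}$ rather than $\{\sigma(\alpha),\sigma(\alpha)^{-1}\}$. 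The assertion that the elementary symmetric functions of the Satake pair are ``independent of any $q^{1/2}$ choice'' is therefore not right: the product $\alpha\alpha^{-1}=1$ is, but the sum is not. The unambiguous formulation, and the one the paper actually uses in the proof of Proposition~\ref{prop:dimeq}, is the Hecke-eigenvalue version: $\pi^\sigma$ is the unramified representation whose spherical Hecke eigenvalue is $\sigma(\lambda)$. Since the Fourier coefficient $a_p$ of a weight-$k$ newform equals $p^{k/2-1}\lambda$ with $p^{k/2-1}\in\Q$ for $k$ even, this is enough to identify the local components of $f^\sigma$ with the $\pi_p^\sigma$, which is all that is needed downstream.
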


\subsection{Galois orbit of a ramified supercuspidal}\label{Galorb}

Our goal here is to determine the Galois orbit of a ramified supercuspidal representation.
A direct proof is possible using arguments similar to what appears below, but it is a bit easier 
to work on the Galois side of the tame local Langlands correspondence, 
which we now recall (see \cite[\S34]{BH}).

Throughout this section, $F$ is a finite extension of $\Q_p$ for a prime $p \neq 2$.
Let $E/F$ be a quadratic extension, and $\xi$ an admissible character of $E^\times$.  
(This means that $\xi$ does not factor through the norm map $N_{E/F}$, and if $E/F$ is 
ramified $\xi |_{U^1_E}$ also does not factor through the norm map.) 
Via class field theory, $\xi$ can be viewed as a character of the Weil group $W_E$, 
and its induction 
\[\rho_{\xi}=\Ind_{W_E}^{W_F}(\xi)\]
is a smooth irreducible 2-dimensional representation of $W_F$.
By \cite[\S29.2]{BH},  
\[\det \rho_\xi = \eta_{E/F} \xi|_{F^\times}\]
where $\eta_{E/F}$ is the quadratic character of $F^\times$ associated to $E/F$ by class field theory.
The tame local 
Langlands correspondence associates to $\rho_\xi$ the dihedral supercuspidal representation 
$\pi(\rho_\xi) := \pi_{\lambda}$,
where $\lambda = \Delta_\xi \xi$ and $\Delta_\xi$ is the character of $E^\times/U_E^1$ 
associated to $(E,\xi)$ as in \cite[\S 34.4]{BH}.  In particular, 
$\Delta_\xi |_{F^\times} = \eta_{E/F}$, and if $E/F$ is unramified then
$\Delta_\xi$ is unramified quadratic.
Since $p\neq 2$, every supercuspidal representation of $\GL_2(F)$ is obtained in this way.

We will focus on the case of trivial central character, which means $\xi|_{F^\times}=\eta_{E/F}$.
Consequently, if $\ol \xi$ denotes the $\Gal(E/F)$-conjugate of $\xi$, then 
$\xi(x)\ol\xi(x) =\xi(N_{E/F}(x))=1$, so $\ol\xi=\xi^{-1}$.  Note that
\[\rho_\xi |_{W_E} \simeq \xi \oplus \ol \xi.\]

 Via \S\ref{Galact}, $ \Aut(\C)$ acts on the set of (isomorphism classes of) $n$-dimensional 
Weil (or Weil--Deligne) representations, as well as on representations of $\GL_n(F)$.  
Furthermore, the local Langlands correspondence commutes with this Galois action \cite[\S 7.4]{H2001}.
Applying \eqref{GaloisInd} to $\rho_\xi$, it then follows immediately that
\begin{equation}\label{pisigma}
\pi^\sigma = \pi(\rho_{\xi^\sigma}).
\end{equation}
In addition, from the definition of the Galois action on representations of $\GL_n(F)$ we see that it preserves conductors.

The equality \eqref{pisigma} gives one description of the Galois orbit of a dihedral 
supercuspidal representation.  
We will require a more explicit description in the case of a
ramified supercuspidal representation $\pi = \pi(\rho)$ (see Proposition \ref{prop:locgal}).
 In this case, there is a 
unique ramified quadratic extension $E = E_\pi$ of $F$ such that $\rho$ is induced 
from $E$ (see the proof of \cite[Theorem 34.1]{BH})
and thus by \eqref{pisigma} $E$ is an invariant for the Galois orbit of $\pi$.

Say $\pi = \pi(\rho_\xi)$ is a supercuspidal representation of $\PGL_2(F)$ of conductor exponent $2r+1$.  
Then $\xi : W_E^{\text{ab}} \simeq E^\times \to \C^\times$ 
is a character of conductor exponent $2r$ (\cite[Theorem 2.3.3]{Sch})
such that $\xi |_{F^\times} = \eta_{E/F}$.  
Choose uniformizers $\varpi_F, \varpi_E$ of $F$ and $E$ so that $\varpi_E^2 = \varpi_F$.  
Then $\xi(\varpi_E) = \pm 1$ since $\xi(\varpi_F) = 1$.
Note that $\xi(\varpi_E)$ is closely related to the root number of $\pi$ 
since we can take $\varpi_E=g_\chi$ due to the assumption of trivial central character, 
so by \eqref{epi}, 
  \[\e_\pi=\lambda(\varpi_E)=\xi(\varpi_E)\Delta_\xi(\varpi_E).\]
Because $\xi^\sigma(\varpi_E) = \xi(\varpi_E)\in\Q$ for any 
  $\sigma \in  \Aut(\C)$, the Galois orbit of $\pi$ is determined by 
(i) $E$, (ii) $\xi(\varpi_E)$, and (iii) the Galois orbit of $\xi|_{\O_E^\times}$.

Let $\xi' = (\Delta_\xi \xi) |_{\O_E^\times}$.  
Since $E/F$ is ramified, $\O_E^\times = \O_F^\times U_E^1$, so $\Delta_\xi|_{\O_E^\times}$ 
factors through $\O_F^\times/(\O_F^\times\cap U_E^1)$ on which it agrees with $\eta_{E/F}$.
In particular, this restriction is quadratic and only depends on $E$.  
Thus 
\[(\xi')^\sigma = \Delta_\xi^\sigma \xi^\sigma |_{\O_E^\times} = \Delta_\xi \xi^\sigma |_{\O_E^\times} 
= (\xi^\sigma)'.\]
  So it is sufficient to study the Galois orbit of $\xi'$.
The advantage of working with $\xi'$ is that it factors through $\O_E^\times/\O_F^\times U_E^{2r}$
 (see \eqref{lgroup}).  

\begin{lemma}
Suppose $E/F$ is ramified.  Then
a character of $\O_E^\times/\O_F^\times U_E^{2r}$ is nontrivial on $U_E^{2r-1}$ if and only if it
is of the form $\xi'$ for an admissible character $\xi$ of conductor $2r$ such that
$\xi |_{F^\times} = \eta_{E/F}$.
\end{lemma}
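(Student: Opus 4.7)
The plan is to verify both directions directly. The forward direction amounts to a level-by-level check on $\xi'$, while the reverse direction requires constructing $\xi$ from $\xi'$ and verifying admissibility; the latter is the main technical point.

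For the forward direction, fix $\xi$ admissible of conductor $2r$ with $\xi|_{F^\times}=\eta_{E/F}$. I check three properties of $\xi' = (\Delta_\xi \xi)|_{\O_E^\times}$. First, triviality on $\O_F^\times$: both $\xi|_{\O_F^\times}$ and $\Delta_\xi|_{\O_F^\times}$ equal $\eta_{E/F}|_{\O_F^\times}$, and since $\eta_{E/F}$ is quadratic, their product is trivial. Second, triviality on $U_E^{2r}$: the factor $\xi$ is trivial there by the conductor assumption, and $\Delta_\xi$ is trivial on all of $U_E^1 \supset U_E^{2r}$ (as noted in the paragraph preceding the lemma). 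Third, nontriviality on $U_E^{2r-1}$: since $\Delta_\xi$ is still trivial on $U_E^1 \supset U_E^{2r-1}$, we have $\xi'|_{U_E^{2r-1}} = \xi|_{U_E^{2r-1}}$, which is nontrivial because $\xi$ has conductor exactly $2r$.

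For the reverse direction, the paragraph before the lemma shows that $\Delta_\xi|_{\O_E^\times}$ depends only on $E$ when $E/F$ is ramified; denote this character by $\Delta_E$. Given $\xi'$, set $\xi_0 := \Delta_E^{-1} \xi'$ on $\O_E^\times$ and extend $\xi_0$ to $E^\times$ by declaring $\xi(\varpi_E)$ to be a square root of $\eta_{E/F}(\varpi_F) \in \{\pm 1\}$. Since $\varpi_F = \varpi_E^2$ and $\xi_0|_{\O_F^\times} = \eta_{E/F}|_{\O_F^\times}$ (because $\xi'|_{\O_F^\times}=1$ and $\Delta_E|_{\O_F^\times}=\eta_{E/F}|_{\O_F^\times}$), the resulting $\xi$ satisfies $\xi|_{F^\times} = \eta_{E/F}$. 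The two assumptions on $\xi'$ then force the conductor of $\xi$ to be exactly $2r$, and $\xi' = (\Delta_\xi \xi)|_{\O_E^\times}$ holds automatically since $\Delta_\xi = \Delta_E$ for any such $\xi$.

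The main obstacle is verifying admissibility of the constructed $\xi$, which (in the ramified case) reduces to $\xi|_{U_E^1} \ne \xi^\sigma|_{U_E^1}$, where $\sigma$ generates $\Gal(E/F)$; this also implies $\xi$ does not factor through $N_{E/F}$ globally, since $\xi = \chi \circ N_{E/F}$ would make $\xi$ Galois-invariant. I argue by contradiction: if $\xi|_{U_E^1} = \xi^\sigma|_{U_E^1}$, then $\xi(x/\sigma(x)) = 1$ for all $x \in U_E^1$. For $x = 1+a\varpi_E^{2r-1}$ with $a \in \O^\times$, the identity $\sigma(\varpi_E) = -\varpi_E$ (from $\varpi_E^2 = \varpi_F \in F$) gives $x/\sigma(x) \equiv 1 + 2a\varpi_E^{2r-1} \pmod{U_E^{2r}}$, and since $p \ne 2$ these cover all of $U_E^{2r-1}/U_E^{2r}$ as $a$ varies. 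This would force $\xi$ to be trivial on $U_E^{2r-1}/U_E^{2r}$, contradicting the assumed nontriviality of $\xi'$ on $U_E^{2r-1}$ (since $\xi = \xi'$ there by construction).
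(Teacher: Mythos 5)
Your proof is correct, and it fills in some details the paper leaves tacit in the reverse direction (extending $\xi'$ from $\O_E^\times$ to $E^\times$, checking $\xi|_{F^\times} = \eta_{E/F}$, and pinning down the conductor). The interesting divergence is in how you establish admissibility. The paper cites \cite[Cor. V.3.3]{Se} to get $N_{E/F}(U_E^{2r-1}) = N_{E/F}(U_E^{2r}) = U_F^r$; a character factoring through the norm would thus have identical restrictions to $U_E^{2r-1}$ and $U_E^{2r}$, contradicting nontriviality on the former. You instead argue on the Galois side: using $\sigma(\varpi_E)=-\varpi_E$ you show directly that the elements $x/\sigma(x)$ for $x = 1+a\varpi_E^{2r-1}$, $a\in\O_F$, already fill out $U_E^{2r-1}/U_E^{2r}$ when $p\ne 2$, so Galois-invariance of $\xi|_{U_E^1}$ would force triviality there. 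The two are equivalent via Hilbert 90 (and both use $p\ne 2$, the paper implicitly through the value $\psi(n)=2n$ in Serre's conductor computation, you explicitly through the factor $2a$), but your version is self-contained and avoids an external citation, at the cost of a slightly longer computation. You also correctly observe that nonfactoring on $U_E^1$ subsumes nonfactoring on all of $E^\times$, a point the paper compresses into ``so it comes from an admissible character.''
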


\begin{proof}
Given $\xi$ as above,  $\xi|_{U_E^{2r-1}}=\xi'|_{U_E^{2r-1}}$
 is nontrivial by Lemma \ref{level}.  

Conversely, suppose $\eta$ is a character of $\O_E^\times/\O_F^\times U_E^{2r}$.
By \cite[Cor. V.3.3]{Se} (for which in the present context we have $t=0$ and $\psi(n)=2n$),
\[N_{E/F}(U_E^{2r-1}) = N_{E/F}(U_E^{2r}) = U_F^r\subset U_E^{2r}\subset \ker\eta.\]
If $\eta$ is nontrivial on $U_E^{2r-1}$, it follows that $\eta$ does not factor through the 
norm map.   So it comes from an admissible character $\xi$.
\end{proof}

Consequently, understanding the Galois orbits of ramified supercuspidals requires
understanding the structure of
$\O_E^\times/\O_F^\times U_E^{2r} \simeq (\O_E^\times/U_E^{2r} )/(\O_F^\times/U_F^r)$.  
This group has order $q^r$ (see \eqref{OEindex}).  Its structure can be understood from that 
  of unit groups mod higher unit groups, as determined in \cite{nakagoshi}.  
  However, the structure of the latter is a bit technical and 
breaks up into several cases.  Essentially, the difference in the $p$-ranks of 
$\O_E^\times/U_E^{2r}$ and $\O_F^\times/U_F^r$ is typically $[F:\Q_p]$ when $r \gg_F 0$, so generally this quotient is not cyclic.  
For simplicity, we will just determine certain hypotheses under which $\O_E^\times/\O_F^\times U_E^{2r}$ 
{\em is} cyclic.

\begin{lemma} \label{lem:cyclic} Suppose $E/F$ is a ramified quadratic extension and $r\ge 1$.

\begin{enumerate}
\item If $\O_E^\times/\O_F^\times U_E^{2r}$ is cyclic, then $F/\Q_p$ is totally ramified (including
  the possibility $F=\Q_p$).  
If $F/\Q_p$ is totally ramified, then taking $r=1$, $\O_E^\times/\O_F^\times U_E^{2}$ is 
  cyclic of order $p$.

\item If $F = \Q_p$ and $p \ge 5$,  then $\O_E^\times/\O_F^\times U_E^{2r}$ 
is cyclic of order $p^r$.
\end{enumerate}
\end{lemma}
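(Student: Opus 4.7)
The plan is first to reduce to studying the pro-$p$ quotient
\[ G \;:=\; U_E^1/(U_F^1 U_E^{2r}), \]
which is identified with $\O_E^\times/\O_F^\times U_E^{2r}$ via the Teichm\"uller decomposition, since $E/F$ ramified means the residue fields of $E$ and $F$ coincide, so $\mu_{q-1}(E) = \mu_{q-1}(F) \subset \O_F^\times$. By \eqref{OEindex}, $|G| = q^r$, and the whole problem is about the $\Z_p$-module structure of this quotient.

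For part (1), to prove ``cyclic $\Rightarrow$ totally ramified'' I would produce a large elementary abelian $p$-subgroup of $G$. The key intermediate step is to verify
\[ U_F^1 \cap U_E^{2r-1} \;\subseteq\; U_F^r \;\subseteq\; U_E^{2r}, \]
which follows from the identity $v_E|_F = 2 v_F$ (so any $x$ in the intersection has $v_F(x-1) \ge \lceil (2r-1)/2 \rceil = r$), together with $\p^r \O_E = \P^{2r}$. This gives $U_F^1 U_E^{2r} \cap U_E^{2r-1} = U_E^{2r}$ and hence an injection
\[ U_E^{2r-1}/U_E^{2r} \;\simeq\; \F_q \;\hookrightarrow\; G. \]
Since $\F_q$ has $p$-rank $f := [k_F:\F_p]$ while a cyclic $p$-group has $p$-rank $1$, cyclicity of $G$ forces $f = 1$, i.e., $F/\Q_p$ is totally ramified. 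The converse at $r=1$ is immediate: $|G| = q = p$, and every group of order $p$ is cyclic.

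For part (2), I would invoke the $p$-adic logarithm. With $F = \Q_p$ and $p \ge 5$, the absolute ramification index $e_E = 2$ satisfies $e_E/(p-1) < 1$, so $\log$ and $\exp$ converge on $U_E^1$ and $\P$ respectively and give mutually inverse $\Z_p$-module isomorphisms $U_E^1 \simeq \P$ that respect the filtration ($U_E^n \leftrightarrow \P^n$) and send $U_F^1$ to $\p = p\Z_p$. Using $\O_E = \Z_p \oplus \varpi_E \Z_p$ as a $\Z_p$-module (with $\varpi_E^2 = pu$ for some unit $u$), I would decompose
\[ \P \;=\; p\Z_p \oplus \varpi_E \Z_p, \qquad \P^{2r} \;=\; p^r \O_E \;=\; p^r \Z_p \oplus p^r \varpi_E \Z_p, \]
and read off
\[ G \;\simeq\; \P/(p\Z_p + \P^{2r}) \;\simeq\; \varpi_E \Z_p / p^r \varpi_E \Z_p \;\simeq\; \Z/p^r\Z. \]

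The conceptual heart of (1) --- and the main obstacle --- is locating an $\F_p$-subspace of $G$ of dimension exactly $f$, so that cyclicity can be turned into a residue-degree constraint; the injection of the top layer $U_E^{2r-1}/U_E^{2r}$ into $G$ is the crucial observation. For (2), the work is routine once one checks the $\Z_p$-module direct-sum decomposition of $\P^{2r}$ inside $\P$ and confirms the log convergence range, both of which are standard but easy to bungle.
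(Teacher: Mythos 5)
Your proof is correct, and it departs from the paper's in both parts. For part~(1), the paper passes to the bottom quotient: it notes that $\O_E^\times/\O_F^\times U_E^{2r}$ surjects onto $\O_E^\times/\O_F^\times U_E^{2}\simeq U_E^1/U_E^2\simeq\F_q^+$ (using $\O_F^\times\cap U_E^2=U_F^1$), and this is cyclic only if $q=p$. You instead inject the top layer $U_E^{2r-1}/U_E^{2r}\simeq\F_q^+$ into $G$ via the containment $U_F^1\cap U_E^{2r-1}\subset U_F^r\subset U_E^{2r}$ — a valid argument but a bit more work for the same conclusion, since the paper's observation gets the $p$-rank obstruction for free from the smallest case. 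For part~(2), the difference is more substantive and arguably to your advantage: the paper invokes Ranum's 1910 generators-and-relations description of $\O_E^\times/U_E^{2r}$ (a product of $\F_p^\times$ with a $p$-group of $p$-rank~$2$, generated by $1+p\in\O_F^\times$ and $1+\sqrt d$) and deduces cyclicity of the quotient by killing the $\O_F^\times$ generators. Your argument replaces this with the $p$-adic logarithm, which for $p\ge 5$ (so that $e_E/(p-1)=2/(p-1)<1$) gives $\Z_p$-linear filtration-preserving isomorphisms $U_E^1\simeq\P$ and $U_F^1\simeq p\Z_p$, reducing the computation to the transparent $\Z_p$-module identity $\P/(p\Z_p+\P^{2r})\simeq\varpi_E\Z_p/p^r\varpi_E\Z_p\simeq\Z/p^r\Z$. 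This is self-contained, avoids the external citation, and makes clear exactly where the hypothesis $p\ge 5$ enters (the convergence range of $\log$ on $U_E^1$, which fails for $p=3$, consistent with the paper's remark that the lemma fails for $F=\Q_3$, $r\ge 2$).
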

\begin{remark} When $F=\Q_3$ and $r\ge 2$, the quotient is not cyclic, \cite[\S13]{ranum}.
\end{remark}

\begin{proof}
(1) Since $\O_E^\times/\O_F^\times U_E^{2}$ is a quotient of 
$\O_E^\times/\O_F^\times U_E^{2r}$,  if the latter group is cyclic, so is the former.  
Note that 
\[\O_E^\times/\O_F^\times U_E^{2} \simeq (\O_E^\times/U_E^2)/(\O_F^\times/U_F^1) \simeq U_E^1/U_E^2,\]
 which is isomorphic to the additive group of $\F_q$.  This is only cyclic when $q=p$, 
i.e., $F/\Q_p$ is totally ramified.

(2) By (1), we may assume $r \ge 2$.  Say $E = \Q_p(\sqrt d)$ with $d$ a squarefree integer and 
$p \ge 5$.  Generators and relations for $\O_E^\times/U_E^{2r}$ are determined in \cite[\S 13]{ranum}.
  It is isomorphic to the product of $\F_p^\times$ and a $p$-group of $p$-rank 2.  The 
$\F_p^\times$ factor is generated by an element of $\O_F^\times$, and the $p$-part is 
generated by $1+p \in \O_F^\times$ and $1+\sqrt d$.  Hence $\O_E^\times/\O_F^\times U_E^{2r}$ is 
generated by the single element $1 + \sqrt d$, which has order $p^r$.
\end{proof}

Suppose $\O_E^\times/\O_F^\times U_E^{2r}$ is cyclic, necessarily of order $q^r=p^r$. 
The primitive characters $\xi$ of $\O_E^\times/\O_F^\times U_E^{2r}$ are those which are 
nontrivial on $U_E^{2r-1}$.  Since $\O_E^\times/\O_F^\times U_E^{2r-1}$ has order $p^{r-1}$, the 
imprimitive characters of $\O_E^\times/\O_F^\times U_E^{2r}$ are those with order dividing $p^{r-1}$.  
Hence the primitive characters $\xi$ are in (non-canonical) bijection with the $p^r$-th roots 
of unity which are not $p^{r-1}$-th roots of unity, simply by specifying $\xi$ on a fixed 
generator.  Two such characters are Galois conjugate if and only if they have the same order.  
We see that there are $(p-1)p^{r-1}$ primitive characters, forming a single Galois orbit.

\begin{proposition} \label{prop:locgal}
Suppose $p$ is odd and $r \ge 1$.  Assume that either (i) $F/\Q_p$ is totally ramified
 (this includes the case $F=\Q_p$) and $r=1$, or (ii) $F = \Q_p$ for $p\ge 5$.  
Then there are precisely four Galois orbits of smooth irreducible representations $\pi$ of $\PGL_2(F)$ 
of conductor exponent $2r+1$, and a complete set of invariants for the Galois orbit of $\pi$ is the 
pair $(E_\pi, \varepsilon_\pi)$, where $E_\pi$ is the ramified quadratic extension of $F$ attached
 to $\pi$ and $\varepsilon_\pi$ is the root number of $\pi$. 
Moreover, with notation as in \eqref{Qz+}, $\Gal(\Q(\zeta_{p^r})^+/\Q)$ acts transitively 
and faithfully on the Galois orbit of such a $\pi$.
\end{proposition}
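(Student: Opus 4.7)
My plan is to apply the tame local Langlands correspondence to translate the question into one about Galois orbits of admissible characters of $E^\times$, and then exploit the cyclicity of the quotient in Lemma~\ref{lem:cyclic}. Writing $\pi=\pi(\rho_\xi)$ with $\xi$ an admissible character of $E^\times$ of conductor exponent $2r$ satisfying $\xi|_{F^\times}=\eta_{E/F}$, equation \eqref{pisigma} gives $\pi^\sigma\simeq \pi(\rho_{\xi^\sigma})$, and two such $\xi$'s yield isomorphic $\pi$'s if and only if they are inverses of each other (using $\bar\xi=\xi^{-1}$ from trivial central character). Both $E_\pi$ (from the discussion preceding \eqref{pisigma}) and the rational sign $\varepsilon_\pi\in\{\pm 1\}$ are Galois-invariant, so the map $\pi\mapsto(E_\pi,\varepsilon_\pi)$ descends to Galois orbits with image in a set of at most $4$ elements.

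By Lemma~\ref{lem:cyclic}, the quotient $\O_E^\times/\O_F^\times U_E^{2r}$ is cyclic of order $p^r$ under either hypothesis. Combined with the preceding lemma characterizing admissible characters of conductor $2r$ and the discussion of $\xi'=(\Delta_\xi\xi)|_{\O_E^\times}$, this lets us parameterize the relevant $\xi$'s by pairs $(\xi(\varpi_E),\xi')$, where $\xi(\varpi_E)\in\{\pm 1\}$ and $\xi'$ is a primitive character of the cyclic group. The involution $\xi\mapsto\xi^{-1}$ fixes $\xi(\varpi_E)$ and inverts $\xi'$, so isomorphism classes of $\pi$'s are parameterized by triples $(E,\xi(\varpi_E),[\xi'])$ where $[\xi']$ denotes the class $\{\xi',(\xi')^{-1}\}$.

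Since $\xi'$ takes values in $\mu_{p^r}$, the $\Aut(\C)$-action factors through $\Gal(\Q(\zeta_{p^r})/\Q)\cong(\Z/p^r\Z)^\times$, which acts transitively on the $\phi(p^r)$ primitive characters of the cyclic group (all of which have order $p^r$). Complex conjugation corresponds to $-1\in(\Z/p^r\Z)^\times$ and implements $\xi'\mapsto(\xi')^{-1}$, so the action on classes $[\xi']$ factors through $(\Z/p^r\Z)^\times/\{\pm 1\}\cong\Gal(\Q(\zeta_{p^r})^+/\Q)$. This quotient has order $\phi(p^r)/2$, equal to the number of classes $[\xi']$, so the induced action is simultaneously transitive and faithful. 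Combined with the $4$ independent choices of $(E,\xi(\varpi_E))$, this yields exactly $4$ Galois orbits, each of size $\phi(p^r)/2=(p-1)p^{r-1}/2$, totaling $2(p-1)p^{r-1}$ in agreement with Proposition~\ref{Lw}. To match these $4$ orbits with the $4$ pairs $(E_\pi,\varepsilon_\pi)$, one uses $\varepsilon_\pi=\xi(\varpi_E)\Delta_\xi(\varpi_E)$: for each fixed $E$, switching $\xi(\varpi_E)$ flips $\varepsilon_\pi$, so the induced map between these two $4$-element sets is surjective and hence bijective.

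The main point requiring care is aligning the involution $\xi\sim\xi^{-1}$ on the character side with complex conjugation in $\Gal(\Q(\zeta_{p^r})/\Q)$: this matching identifies the kernel of the full Galois action on orbits with $\{\pm 1\}$, after which the orbit count, transitivity, and faithfulness all fall out of the cyclicity of $\O_E^\times/\O_F^\times U_E^{2r}$ and the fact that every primitive character of a cyclic group of order $p^r$ has order exactly $p^r$.
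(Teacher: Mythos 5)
Your proposal is correct and follows essentially the same route as the paper's proof: translate to admissible characters via tame LLC, use the cyclicity from Lemma~\ref{lem:cyclic} to identify the Galois action with that of $\Gal(\Q(\zeta_{p^r})/\Q)$ on primitive characters, and account for the involution $\xi\mapsto\xi^{-1}$ by passing to $\Gal(\Q(\zeta_{p^r})^+/\Q)$. You are a bit more careful than the paper in working explicitly with the untwisted character $\xi'=(\Delta_\xi\xi)|_{\O_E^\times}$ (the paper speaks loosely of "$\xi(\beta)$'' for $\beta$ a generator of $\O_E^\times/\O_F^\times U_E^{2r}$, which is really a statement about $\xi'$), and you derive faithfulness from a cardinality count rather than directly observing the free transitive action on inverse-pairs of primitive $p^r$-th roots of unity; both are fine.
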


\begin{remark} 
The case of $F=\Q_p$ is also implicit in the proof of \cite[Theorem 2.7]{DPT}, 
though the argument there is somewhat different.
\end{remark}

\begin{proof}
Let $\pi=\pi(\rho_\xi)$ and $\tilde\pi=\pi(\rho_{\tilde\xi})$ be supercuspidal representations 
of $\PGL_2(F)$ of conductor exponent $2r+1$.  Suppose $E_{\pi}=E_{\tilde\pi}=E$ and 
$\xi(\varpi_E)=\tilde\xi(\varpi_E)$, where $\varpi_E^2$ is a uniformizer of $F$.
Under the given hypotheses, $f=1$, so by the preceding discussion there exists
  $\sigma\in  \Aut(\C)$ such that
$\tilde\xi|_{\O_E^\times}=\xi^\sigma|_{\O_E^\times}$. 
But then $\tilde\xi=\xi^\sigma$ on $E^\times$ as well, so $\tilde\pi=\pi^\sigma$.  Since there are 
exactly two possibilities for $E$ and two possibilities for $\xi(\varpi_E)\in\{\pm 1\}$, 
it follows that there are exactly four possibilities for the Galois orbit of $\pi$.
Further, using the fact that the local root number $\varepsilon_\pi$ is 
the eigenvalue of the Atkin--Lehner operator $W$ on a newvector in $\pi$, it is easy 
to see that $\varepsilon_\pi$ is also an invariant of the Galois orbit of $\pi$ that can be used
in place of the related parameter $\xi(\varpi_E)$.

It remains to prove the last statement.  A given $\pi$ as above is parametrized (in a
Galois-equivariant way) by the pair $\{\xi,\xi^{-1}\}$ since $\xi$ and $\xi^{-1}$ induce
isomorphic supercuspidals.   By the preceding discussion, $\xi$ is determined by 
(i) $\xi(\varpi_E)\in\{\pm 1\}$ and (ii) the primitive $p^r$-th root of unity $\xi(\beta)$ where $\beta$
is a generator of the cyclic group $\O_E^\times/\O_F^\times U_E^{2r}$.
  Since $\Gal(\Q(\zeta_{p^r})^+/\Q)$ acts faithfully and transitively on the collection 
of 2-element sets $\{ \zeta, \zeta^{-1} \}$ as $\zeta$ ranges over primitive $p^r$-th roots of 
unity, it also acts faithfully and transitively on the Galois orbit of $\pi$.
\end{proof}

\begin{remark} 
In general, there are more than four Galois orbits of supercuspidal representations of $\PGL_2(F)$
having conductor exponent $2r+1$.  For instance, if $q=p^f > p$, while the values of 
a character $\xi$ can be chosen independently on different generators $a_1, \dots, a_t$
of the noncyclic group $\O_E^\times/\O_F^\times U_E^{2r}$,
Galois automorphisms do not behave independently on the elements $\xi(a_1), \dots, \xi(a_t)$.
\end{remark}

Finally we relate the above discussion to inertial types.   We say that two smooth irreducible 
representations $\pi, \pi'$ of $\GL_2(F)$ are in the same (inertial) type if their 
Langlands parameters $\rho, \rho'$ have isomorphic restrictions to the inertia group 
$I_F \subset W_F$.  If $\pi$ is supercuspidal, then $\pi'$ is in the same type as $\pi$ 
if and only if $\pi' \simeq \pi \otimes \chi$, where $\chi$ is an unramified character of 
$F^\times$.  See \cite{H} for more details.

Let $\pi = \pi(\rho_\xi)$ be a supercuspidal of $\PGL_2(F)$ of conductor $2r+1$, and 
$\chi$ be the unramified quadratic character of $F^\times$.  Then 
$\pi \otimes \chi = \pi(\rho_\xi \otimes \chi)$ is the only other representation of $\PGL_2(F)$ with the same 
inertial type as $\pi$.  The Langlands parameters for both $\pi$ and 
$\pi \otimes \chi$ have the same restriction $\xi|_{\O_E^\times}$ to the inertia group 
$I_E$ of $E$.  Further, $\pi$ and $\pi \otimes \chi$ have opposite 
root numbers (e.g., see \cite[(5.5)]{deligne}).  Hence under the hypotheses of Proposition~\ref{prop:locgal}, we see that 
the Galois orbit of the inertial type of $\pi$ (among representations with trivial central 
character) consists of all supercuspidals $\pi'$ of the same conductor such that $E_\pi = E_{\pi'}$.

\begin{remark} 
In \cite{DPT}, the authors studied Galois orbits of inertial types.  
In fact, because the inertial type is not a fine enough invariant for their end goal, 
\cite{DPT} augment inertial type Galois orbits with ``minimal'' Atkin--Lehner signs.  
We suggest that alternatively one might consider Galois orbits of representations as above, rather 
than Galois orbits of types together with a sign.

When $p = 3$ and $r=1$,
  there are only four supercuspidal representations of $\PGL_2(\Q_3)$ of conductor $3^3$.
 Case (i) of the above proposition implies that the Galois orbits of these representations 
  are singleton sets.  
(It is asserted in \cite[Theorem 2.7]{DPT} that there are four Galois orbits of 
local inertial types in this situation. But this cannot be true since it would imply that there are 
eight Galois orbits of supercuspidals of conductor $3^3$.)
\end{remark}

\section{Dimension formulas}

\subsection{General setup}\label{setup}

Let $G=\GL_2$, $Z$ its center, and $\olG=G/Z$.  Let $\A$ be the adele ring of $\Q$, and normalize Haar measure on $\olG(\A)$ so that
\[\meas(\olG(\Q)\bs\olG(\A))=\pi/3\]
 with $\olG(\Q)$ having the counting measure. 
Write $K_p = \GL_2(\Z_p)$, $\ol{K}_p$ its image in $\olG(\Q_p)$, and normalize the Haar measure on 
 $\olG(\Q_p)$ so that $\meas(\ol{K}_p)=1$.  There is a unique choice of Haar measure
on $\olG(\R)$ so that the global measure on $\olG(\A)$ fixed above is the restricted product of 
the local measures.

We will need an explicit expression for the formal degree of a supercuspidal representation.
The following is a special case of a result of Carayol.

\begin{lemma}\label{fdlem}
Let $\sigma$ be a supercuspidal representation of $\GL_2(F)$ of conductor $q^c$.  Then
the formal degree of $\sigma$, computed relative to the Haar measure normalized by 
$\meas(\ol K)=1$, is 
\begin{equation}\label{fd}
d_\sigma = \begin{cases} \frac{q^2-1}2q^{r-1}&\text{if $c=2r+1$ is odd}\\
  (q-1)q^{r-1}&\text{if $c=2r$ is even}.\end{cases}
\end{equation}
In particular, having fixed the measure, the formal degree depends only on the conductor of $\sigma$.
\end{lemma}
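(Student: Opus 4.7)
The plan is to extract the formal degree from Schur orthogonality for compactly induced representations. The general principle is that if $\sigma = \cInd_J^G(\tau)$ with $J$ open and compact modulo $Z$ and $\tau$ irreducible of the prescribed central character, then with respect to the Haar measure $d\bar g$ on $\bar G := G/Z$ normalized by $\meas(\bar K)=1$,
\[
d_\sigma = \frac{\dim \tau}{\meas(J/Z)}.
\]
The proof then reduces to a volume computation for $J/Z$, handled separately in the odd- and even-conductor cases.

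For odd conductor $c=2r+1$, I would use the Kutzko model of Section~\ref{scmodel}: $\sigma = \pi_{\chi_\lambda}=\cInd_{J_{E,r}}^G(\chi_\lambda)$, with $J_{E,r}=E^\times U^r$, $E/F$ ramified quadratic, and $\dim \chi_\lambda=1$. I factor $\meas(J_{E,r}/Z) = [J_{E,r}:ZU^r]\cdot \meas(\overline{U^r})$, where $\overline{U^r}$ denotes the image of $U^r$ in $\bar G$. The first index equals $[E^\times:F^\times U_E^r]=2 q^s$: the factor $2$ is the valuation index $[v(E^\times):v(F^\times)]=2$, and $q^s=[\O_E^\times:\O^\times U_E^r]$ by \eqref{OEindex}. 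For the second factor I compute $[\bar K:\overline{U^r}]$ using the tower $U^r \subset \mathfrak{A}^\times \subset K$ for the Iwahori $\mathfrak{A}=\smat{\O}{\O}{\p}{\O}$: $[K:\mathfrak{A}^\times]=q+1$, $|\mathfrak{A}^\times/U^1|=(q-1)^2$, $|U^1/U^r|=q^{2(r-1)}$ via the isomorphism \eqref{Ustep}, and $[\O^\times U^r : U^r]=(q-1)q^{s'-1}$. Combining these gives $[\bar K:\overline{U^r}]=(q^2-1)q^{r+s-1}$, so $\meas(J_{E,r}/Z)=2/[(q^2-1)q^{r-1}]$, whence $d_\sigma=\tfrac{q^2-1}{2}q^{r-1}$.

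For even conductor $c=2r$ the supercuspidal is induced from data attached to the unramified quadratic extension $E/F$. When $r=1$ (depth zero), $\sigma = \cInd_{ZK}^G \tau$ with $\tau$ inflated from a cuspidal representation of $\GL_2(\O/\p)$ of dimension $q-1$; since $ZK/Z \cong \bar K$ has measure $1$, we obtain $d_\sigma = q-1$. When $r\ge 2$, the Bushnell--Kutzko construction presents $\sigma$ as $\cInd_J^G(\lambda)$ for a character $\lambda$ of $J=E^\times (1+\varpi^{r-1}M_2(\O))$; the analogous index count, now with valuation index $[v(E^\times):v(F^\times)]=1$, yields $\meas(J/Z) = 1/[(q-1)q^{r-1}]$, hence $d_\sigma=(q-1)q^{r-1}$.

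The main technical obstacle is the bookkeeping of subgroup indices and verifying that the normalization $\meas(\bar K)=1$ is applied consistently across both parity cases and across the ramified/unramified induction models; the fact that the resulting formal degree depends only on $c$, not on the particular inducing datum, serves as a useful consistency check. Alternatively, one may simply cite Carayol's original derivation, which subsumes both parity cases in a single uniform formula.
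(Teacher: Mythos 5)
Your proposal takes a genuinely different route from the paper. The paper simply cites Carayol's uniform formal degree formula for supercuspidals of $\GL_n(F)$ (for $n=2$) and carries out a one-line renormalization from his Iwahori-based Haar measure to the one with $\meas(\ol K)=1$. You instead propose a direct Schur-orthogonality computation $d_\sigma = \dim\tau / \meas(J/Z)$ together with explicit index bookkeeping. The odd-conductor calculation you give is correct and is a nice self-contained check against the Kutzko model of \S\ref{scmodel}, which the paper does not carry out; this is a genuine alternative to citing Carayol.

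However, your even-conductor, $r\ge 2$ sketch has two concrete errors. First, the inducing group is $J = E^\times(1+\varpi^{\lfloor r/2\rfloor}M_2(\O))$, not $E^\times(1+\varpi^{r-1}M_2(\O))$: the stratum level here is $n = r-1$, and Bushnell--Kutzko take $J = E^\times U_{\mathfrak A}^{\lceil n/2\rceil}$. Second, the inducing representation is \emph{not} always a character: when the stratum level $n=r-1$ is even (i.e.\ $r$ odd), the simple character on $H^1$ extends uniquely to a $q$-dimensional Heisenberg representation $\eta$ of $J^1 = E^\times U_{\mathfrak A}^{\lceil n/2\rceil}\cap \ker$ (and then to $J$), so $\dim\Lambda = q$ rather than $1$. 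Redoing your index count with the correct $J$ gives $\meas(J/Z) = 1/((q-1)q^{2\lceil n/2\rceil-1})$; combined with $\dim\Lambda \in\{1,q\}$ according to the parity of $n$, both parities produce $d_\sigma = (q-1)q^{r-1}$, so the final answer is right, but the intermediate claims as you have stated them are not. This illustrates precisely why the paper opts to quote Carayol: the even-conductor bookkeeping is more delicate than it looks, and the parity of $r$ changes the shape of the inducing datum. If you want to keep the direct approach, you need to split the even case by the parity of $r$ (equivalently, of the stratum level $n=r-1$) and incorporate the Heisenberg factor; otherwise, your suggested fallback of citing Carayol is exactly what the paper does.
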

\begin{proof}
In \cite[\S5.11]{C}, Carayol computed the formal degrees of the supercuspidal representations
of $\GL_n(F)$.  Using the measure $\meas_C$ normalized by 
  $\meas_C(\ol{K_0(\p)})=n^{-1}(q^n-1)^{-1}(q-1)^n$ where $K_0(\p)=\O^\times K_1(\p)$ is the 
  Iwahori subgroup,
for $\sigma$ of conductor $q^c$ he obtained the formal degree
\[d_\sigma^C = b\frac{q^n-1}{q^b-1}q^{\frac12((n-1)(c-n)+b-n)},\]
where $b=\gcd(c,n)$.
Taking $n=2$ and setting $\meas(\ol K)=1$, we have
\[\meas(\ol{K_0(\p)}) = [\ol K:\ol{K_0(\p)}]^{-1}=(q+1)^{-1} =\frac2{q-1}\meas_C(\ol{K_0(\p)}).\]
So in our normalization, his result gives
\[
d_\sigma = \frac{q-1}2d_\sigma^C=\frac{q-1}2\cdot b\frac{q^2-1}{q^b-1}q^{\frac12(c+b-4)},
\]
where $b=\gcd(c,2)$.
\end{proof}

Now fix three integers $M,T,S$ which are pairwise relatively prime with $S$ and $T$ square-free, and write
\begin{equation}\label{MST}
N=M \prod_{p|S}p^{2r_p} \prod_{p|T}p^{2r_p+1} 
\end{equation}
where $r_p\ge 1$ for each $p\mid ST$.  Fix a tuple $\pi_{ST}=(\pi_p)_{p|ST}$
  of supercuspidal representations of $\olG(\Q_p)$,
  with $\pi_p$ of conductor $p^{2r_p}$ 
  (resp.\ $p^{2r_p+1}$) if $p\mid S$ (resp.\ $p\mid T$).
For each $p\mid ST$, we may write $\pi_p=\cInd_{J_p}^{G(\Q_p)}(\lambda_p)$, where
$J_p$ is an open subgroup which contains, and is compact modulo, $Z_p=Z(\Q_p)$, and 
 $\lambda_p$ is an irreducible representation which is trivial on $Z_p$, subject to:
\begin{itemize}
\item $\dim\lambda_p=1$ if $p\mid T$
\item $J_p=Z_pK_p$ if $p\mid S$.
\end{itemize}
We will define a $Z(\A)$-invariant test function $f:G(\A)\longrightarrow \C$ with the property
\[\tr R(f) = \dim S_k^\new(N;\pi_{ST}),\]
where $R(f)$ is the operator on $L^2(\olG(\Q)\bs\olG(\A))$ defined by
\[R(f)\phi(x)=\int_{\olG(\A)}f(g)\phi(xg)dg.\]

For $p\mid ST$, define a local test function $f_{\pi_p}: G(\Q_p)\longrightarrow \C$ as follows:
\begin{equation}\label{fp}
f_{\pi_p}(x)=\begin{cases}d_{\pi_p}\ol{\lambda_p(x)}&\text{if } x\in J_p \text{ and } p \mid T\\
\ol{\tr \lambda_p(x)}&\text{if } x\in J_p \text{ and } p \mid S\\
0 &\text{if } x \not \in J_p,\end{cases}
\end{equation}
where $d_{\pi_p}$ is the formal degree of $\pi_p$, given in \eqref{fd}.
Thus, $f_{\pi_p}$  is the complex conjugate of a matrix coefficient of $\pi_p$ when $p \mid T$ 
and the sum of $\dim\lambda_p=d_{\pi_p}$ (conjugated) matrix coefficients when $p \mid S$.

At primes $\ell\nmid N$, we let $f_\ell$ be the characteristic function of $Z_\ell K_\ell$.

Let $q$ be a prime dividing $M$, and let $N_q=v_q(N)$. For $0\le j\le N_q$ let 
\[K_0(q^j)_q=\{\mat abcd\in K_q|\, c\in q^j\Z_q\}\]
as usual, and define $\phi_{q^j}:G(\Q_q)\longrightarrow\C$ by
\[\phi_{q^j}(g)=\begin{cases}\meas(K_0(q^j)_q)^{-1}&\text{if }g\in Z_qK_0(q^j)_q\\
0&\text{otherwise.}\end{cases}\]
Used as a local test function in the trace formula, $\phi_{q^j}$ serves to project the
automorphic spectrum onto its $K_0(q^j)_q$-fixed vectors. 
We define $f_q$ to be the linear combination
\begin{equation}\label{fq}
f_q = \phi_{q^{N_q}}-2\phi_{q^{N_q-1}}+\phi_{q^{N_q-2}},
\end{equation}
where $\phi_{q^j}$ is taken to be identically $0$ if $j<0$.
The role of $f_q$ is to give the trace on the locally $q^{N_q}$-new part of the spectrum.
Indeed, using \cite[Corollary to the proof]{Cas},
one shows that for any infinite dimensional irreducible admissible representation $\pi_q$ of $\PGL_2(\Q_q)$,
   $\tr \pi_q(f_q)\in \{0,1\}$ is nonzero if and only if $\operatorname{cond}(\pi_q) =q^{N_q}$.

At $\infty$ we take $f_\infty=d_k\ol{\sg{\pi_k(g)v,v}}$, where $\pi_k$ is the
weight $k$ discrete series representation of $\olG(\R)$, $v$ is a lowest weight unit vector,
and $d_k$ is the formal degree computed relative to our fixed choice of Haar measure on $\olG(\R)$.
This function is integrable over $\olG(\R)$ if and only if $k>2$, so this hypothesis will be
in force.

Let 
\[f=f_{\pi_{ST},k,N}=f_\infty \prod_{p|ST}f_{\pi_p}\prod_{q| M}f_q\prod_{\ell\nmid N}f_\ell\in L^1(\olG(\A)).\]
By \cite[Prop. 5.5 and Theorem 7.1]{K}, whose proofs carry over verbatim to the case with extra level
 structure $M$ included here, assuming $k>2$ 
and also that $T\ge 5$ is odd, we have
\begin{equation}\label{dim1}
\dim S_k^{\new}(N;\pi_{ST}) = \tr R(f)= \frac\pi 3f(1)
+ \frac12\Phi(\mat{}{-T}1{},f),
\end{equation}
for the elliptic orbital integral
\begin{equation}\label{Phig}
\Phi(\g,f)=\int_{\ol{G_\g(\Q)}\bs\olG(\A)}f(g^{-1}\g g)dg,
\end{equation}
where $G_\g$ is the centralizer of $\g$ in $G$.
(There are up to two additional elliptic terms if $T\in\{1,3\}$ or $T$ is even.)

The orbital integral factorizes as the product of a global measure term and the local orbital
integrals
\[\Phi(\g,f_{p})=\int_{\ol{G_\g(\Q_p)}\bs\olG(\Q_p)}f_{p}(g^{-1}\g g)dg\]
for $p\mid 2N$ (cf. \eqref{Phiglobal} below).
(We must fix appropriate Haar measures on the local groups $\ol{G_\g(\Q_p)}$. See \cite[\S3.3]{K} for details.)
The difficulty in evaluating \eqref{dim1} lies in computing $\Phi(\smat{}{-T}1{},f_{\pi_p})$
for each $p\mid ST$.
In the case where $\pi_p$ has conductor $p^2$ or $p^3$, this was done in \cite{K}. The 
general case is considerably more difficult, and computing $\Phi(\g,f_{\pi_p})$ for general 
conductor and $\g$ remains open.
However, simply by considering the support of $f_{\pi_p}$ we obtain the following,
which extends \cite[Prop. 5.6]{K}.

\begin{proposition}\label{Tprop}
Fix $N$ as in \eqref{MST}, with $T\ge 5$ odd.
For each $p\mid T$, let $E_p/\Q_p$ be the ramified quadratic extension attached to $\pi_p$.
Suppose that either
\begin{enumerate}
\item[(i)] for some $p\mid T$, $E_p\neq \Q_p(\sqrt{-T})$, or
\item[(ii)] for some odd $p\mid S$, $\Bigl(\frac{-T}p\Bigr)=1$.
\end{enumerate}
 Then
\begin{equation}\label{Tdim}
\dim S_k^\new(N;\pi_{ST}) = \frac{k-1}{12}\psi^\new(M)\prod_{p|T}\frac{p^2-1}2p^{r_p-1}
\prod_{p|S}(p-1)p^{r_p-1}
\end{equation}
for $\psi^\new(M)$ as defined in Theorem \ref{dim}.
\end{proposition}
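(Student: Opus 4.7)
The plan is to apply the simple trace formula identity \eqref{dim1}, which already presents
\[
\dim S_k^{\new}(N;\pi_{ST}) = \tfrac{\pi}{3} f(1) + \tfrac12 \Phi(\gamma_0, f), \qquad \gamma_0 = \smat{}{-T}{1}{},
\]
as a main term plus a single elliptic contribution. What needs to be shown is that $\tfrac{\pi}{3} f(1)$ reproduces the right-hand side of \eqref{Tdim}, and that $\Phi(\gamma_0, f)$ vanishes under either hypothesis (i) or (ii).

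The main-term check is a straightforward local computation. Multiplicativity gives $f(1) = f_\infty(1) \prod_{p\mid ST} f_{\pi_p}(1) \prod_{q\mid M} f_q(1)$, since $f_\ell(1)=1$ for $\ell\nmid N$. The archimedean factor contributes $\tfrac{\pi}{3} f_\infty(1) = \tfrac{k-1}{12}$ via the standard formal degree of the weight-$k$ discrete series against our normalized measure. At each $p\mid ST$, \eqref{fp} gives $f_{\pi_p}(1)=d_{\pi_p}$, and Lemma~\ref{fdlem} evaluates $d_{\pi_p}$ to $\tfrac{p^2-1}{2}p^{r_p-1}$ for $p\mid T$ and $(p-1)p^{r_p-1}$ for $p\mid S$. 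At each $q\mid M$, a case-by-case evaluation of the alternating sum \eqref{fq} using $\phi_{q^j}(1) = [K_q:K_0(q^j)_q]$ gives $f_q(1) = \psi^{\new}(q^{v_q(N)})$, matching the three-case definition of $\psi^{\new}$ on prime powers. Assembling these factors reproduces the right-hand side of \eqref{Tdim}.

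For the vanishing, the factorization $\Phi(\gamma_0, f) = C \prod_v \Phi(\gamma_0, f_v)$ (with $C$ a global measure constant and local Haar measures chosen as in \cite[\S3.3]{K}) reduces the task to exhibiting a single place where the local factor is zero. Under hypothesis (i), pick $p\mid T$ with $E_p \ne \Q_p(\sqrt{-T})$; since $p$ exactly divides the squarefree integer $T$, the extension $\Q_p(\sqrt{-T})$ is ramified quadratic, so $\gamma_0$ is regular elliptic at $p$. Because $f_{\pi_p}$ is a matrix coefficient of the dihedral supercuspidal $\pi_p$, $\Phi(\gamma_0, f_{\pi_p})$ is proportional to the Harish-Chandra character $\Theta_{\pi_p}(\gamma_0)$, which vanishes whenever $\Q_p(\gamma_0)$ does not coincide with the inducing extension $E_p$. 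Equivalently, this may be read off from the support $\Supp(f_{\pi_p}) \subset J_p = E_p^\times U^{r_p}$: if a $G(\Q_p)$-conjugate of $\gamma_0$ could be written as $yu$ with $y\in E_p^\times$ and $u\in U^{r_p}$, the equations $\tr(yu)=0$ and $\det(yu)=T$ would force $y$ to be an approximate square root of $-T$ inside $E_p$, and a Hensel/Krasner argument would upgrade this to an exact square root, contradicting $E_p \ne \Q_p(\sqrt{-T})$.

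Under hypothesis (ii), pick odd $p\mid S$ with $\Bigl(\frac{-T}{p}\Bigr)=1$; then $\Q_p(\sqrt{-T})$ splits, so $\gamma_0$ is split regular semisimple at $p$. Being a sum of matrix coefficients of the supercuspidal $\pi_p$, the function $f_{\pi_p}$ is cuspidal on $\GL_2(\Q_p)$, and cuspidal functions have vanishing orbital integrals along non-elliptic regular semisimple orbits: by the Weyl integration formula the integral along a split torus can be rewritten as (a constant multiple of) the constant term of $f_{\pi_p}$ along the Borel, which vanishes by cuspidality. Hence $\Phi(\gamma_0, f_{\pi_p})=0$. The main technical hurdle is the Hensel/character input in case (i); case (ii) rests on a standard property of cuspidal matrix coefficients, and the main-term computation is routine.
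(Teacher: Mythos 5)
Your proof is correct and follows essentially the same route as the paper: factor the identity term locally to match $\psi^\new$ and the formal degrees, and kill the elliptic term by examining the support of $f_{\pi_p}$ (case (i)) or invoking cuspidality at a split place (case (ii)). The one place where the paper is more explicit is case (i): rather than a Hensel/Krasner approximation argument, it writes out the determinant of $g^{-1}\gamma_0 g$ in the ramified component $g_{\chi_p}\Q_p^\times U_p^1$ and reduces mod $p$ to obtain $(\tfrac{-T/p}{p}) = (\tfrac{t_p}{p})$ directly, which is the cleanest way to land on the criterion $E_p = \Q_p(\sqrt{-T})$.
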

\begin{remark} The argument below shows that more generally without the hypotheses on $T$, 
  when $p\mid T$ and $\g\in G(\Q)$ is elliptic in $G(\Q_p)$ with $v_p(\det\g)$ odd,
$\Phi(\g,f_{\pi_p})\neq 0$ only when $E_p = \Q_p(\g)$.
\end{remark}

\begin{proof}
The identity term in \eqref{dim1} is given by
\[\frac\pi3f(1)=\frac\pi3 d_k \prod_{q|M}f_q(1) \prod_{p|ST}d_{\pi_p}.\]
Let $\psi$ be the multiplicative function defined on prime powers by
\[\psi(q^j) =[K_q:K_0(q^j)_q]=\meas(K_0(q^j)_q)^{-1}=q^j(1+\frac1q)\]
for $j>0$.  Then by \eqref{fq} for a prime $q|M$ we have
\[f_q(1)=\psi(q^{N_q}) - 2\psi(q^{N_q-1})\delta_{N_q\ge 1}+\psi(q^{N_q-2})\delta_{N_q\ge 2},\]
where $\delta$ is an indicator function.
One checks easily that this coincides
with $\psi^\new(q^{N_q})$. 
Using \eqref{fd} and the fact (see, e.g., \cite[Prop. 14.4]{KL}) that $d_k=\frac{k-1}{4\pi}$,
we see that the identity term coincides with the right-hand side of \eqref{Tdim}.

By \eqref{dim1}, it remains to show that $\Phi(\g,f)=0$ under the given hypothesis, 
where $\g=\smat{}{-T}1{}$.
If hypothesis (ii) holds, then the characteristic polynomial $P_\g(X)=X^2+T$ factors 
 mod $p$, so by Hensel's Lemma (using $p\neq 2$) $\g$ is hyperbolic (rather than elliptic) 
in $G(\Q_p)$.  This implies that $\Phi(\g,f_{\pi_p})=0$ (see \cite[Prop. 4.3]{K}), giving
the result in this case.

 Now suppose that $\Phi(\g,f)\neq 0$.
Then for each $p\mid T$, $f_p(g^{-1}\g g)\neq 0$ for some $g\in G(\Q_p)$.  
Write $U_p^{r_p}$ for $U^{r_p}$ in \eqref{Ur}, and similarly $t_p$ and $\chi_p$ for the $t$ and $\chi$ determined
by $\pi_p$ as in Proposition \ref{chiprop}.  Then the support of $f_p$ satisfies
\[J_{E_p,r_p}=E_p^\times U_p^{r_p} \subset E_p^\times U_p^1=g_{\chi_p}\Q_p^\times U_p^1 \bigcup \Q_p^\times U_p^1.\]
Since $\det(g^{-1}\g g)=T\in p\Z_p^\times$, $g^{-1}\g g$ must lie in the ramified component 
$g_{\chi_p}\Q_p^\times U^1_p$:
\[g^{-1}\g g = z\mat{}{t_p}p{}\mat ab{pc}d \in \Q_p^\times g_{\chi_p} U_p^1,\]
where in fact $z\in \Z_p^\times$.
Taking determinants and dividing by $p$,
\[T/p \equiv -z^2 t_p\mod p,\]
which shows that $(\frac{-T/p}p)=(\frac{t_p}p)$. Since $E_p=\Q_p(\sqrt{pt_p})$ 
this condition is equivalent to $E_p=\Q_p(\sqrt{-T})$.
It follows that under hypothesis (i), $\Phi(\g,f)=0$.
\end{proof}

\subsection{Invariance of global dimension across local Galois orbits}\label{invariance}

Here we use Proposition~\ref{prop:locgal} to show that the number of newforms with fixed ramified 
supercuspidal components $\pi_p$ at a finite set of primes $p \ge 5$ depends only on the
Galois orbit of each $\pi_p$.  

\begin{proposition}\label{prop:dimeq}
Let $k \ge 2$ and $T\ge 5$ a square-free odd integer.  Let $M\ge 1$ be an integer 
relatively prime to $T$, and write $N = M\prod_{p|T}p^{2r_p+1}$ for $r_p\ge 1$, with $r_3=1$ if $3\mid T$.
For each $p \mid T$, let $\pi_p$ and $\pi_p'$ be irreducible supercuspidal representations of 
$\PGL_2(\Q_p)$ of conductor $2r_p+1$ such that $E_{\pi_p} = E_{\pi_p'}$ and 
$\varepsilon_{\pi_p} = \varepsilon_{\pi_p'}$.
Write $\pi_T = (\pi_p)|_{p | T}$ and $\pi'_T = (\pi'_p)|_{p | T}$.
Then
\[ \dim S_k^{\new}(N;\pi_T) = \dim S_k^{\new}(N;\pi_T'). \]
\end{proposition}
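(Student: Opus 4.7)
The plan is to produce an element $\sigma \in \Aut(\C)$ such that the Galois twist $f \mapsto f^\sigma$ on Fourier coefficients yields a bijection from the set of (normalized) newforms spanning $S_k^{\new}(N;\pi_T)$ to the set of those spanning $S_k^{\new}(N;\pi_T')$. Since the Galois action preserves weight, level, and newness, and sends a newform with local component $\pi_p$ at $p$ to one with local component $\pi_p^\sigma$ (by compatibility of the local and global Galois actions with the local Langlands correspondence, as recalled in \S\ref{Galact}), this will give equality of the two dimensions.

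First, I would apply Proposition \ref{prop:locgal} at each $p \mid T$ separately. Case (i) of that proposition covers $p=3$ with $r_3=1$, and case (ii) covers $p \ge 5$, so under the hypotheses here the proposition is available at every prime factor of $T$. The hypothesis $E_{\pi_p}=E_{\pi_p'}$ and $\varepsilon_{\pi_p}=\varepsilon_{\pi_p'}$ therefore places $\pi_p$ and $\pi_p'$ in the same $\Aut(\C)$-orbit, and the faithful transitive action of $\Gal(\Q(\zeta_{p^{r_p}})^+/\Q)$ on this orbit yields a unique $\tau_p \in \Gal(\Q(\zeta_{p^{r_p}})^+/\Q)$ that carries the isomorphism class of $\pi_p$ to that of $\pi_p'$; any lift of $\tau_p$ to $\Aut(\C)$ induces the same class $\pi_p^\sigma \cong \pi_p'$.

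Next I would assemble the $\tau_p$ into a single global automorphism. The cyclotomic fields $\Q(\zeta_{p^{r_p}})$ for distinct primes $p \mid T$ are linearly disjoint over $\Q$ (their compositum has Galois group $\prod_{p \mid T}(\Z/p^{r_p}\Z)^\times$), and hence so are their real subfields $\Q(\zeta_{p^{r_p}})^+$. Consequently, the tuple $(\tau_p)_{p \mid T}$ defines an automorphism of the compositum $L=\prod_{p \mid T}\Q(\zeta_{p^{r_p}})^+$ whose restriction to the $p$-th factor is $\tau_p$, and this automorphism extends (in many ways) to some $\sigma \in \Aut(\C)$.

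Finally, this $\sigma$ satisfies $\pi_p^\sigma \cong \pi_p'$ for every $p \mid T$ by construction, so $f \mapsto f^\sigma$ restricts to a bijection between the Hecke eigenbases of $S_k^{\new}(N;\pi_T)$ and $S_k^{\new}(N;\pi_T')$, yielding the claimed equality of dimensions. The substantive content sits in Proposition \ref{prop:locgal}; the only real step here is the linear disjointness observation needed to realize all of the local $\tau_p$ through a single global $\sigma$, and that is the place where one must be a bit careful, though it is entirely standard.
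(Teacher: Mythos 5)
Your overall strategy is the same as the paper's: apply Proposition \ref{prop:locgal} at each $p\mid T$ to obtain local Galois conjugating automorphisms $\tau_p$, assemble them into one global $\sigma \in \Aut(\C)$ using linear disjointness of cyclotomic fields, and then transport $S_k^{\new}(N;\pi_T)$ to $S_k^{\new}(N;\pi'_T)$ via $f\mapsto f^\sigma$. The cyclotomic assembly step is correct and matches the paper's.

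There is, however, a real gap at the step you describe parenthetically. You claim that the Galois twist $f\mapsto f^\sigma$ ``sends a newform with local component $\pi_p$ at $p$ to one with local component $\pi_p^\sigma$,'' attributing this to \emph{compatibility} ``as recalled in \S\ref{Galact}.'' But \S\ref{Galact} only records the \emph{local} Galois action on abstract representations (and, in \S\ref{Galorb}, the equivariance of the \emph{local} Langlands correspondence). What you actually need is a \emph{global-to-local} compatibility: that if $f$ corresponds to $\pi_k\otimes\bigotimes_p\pi_p$, then the classically-defined $f^\sigma$ corresponds to $\pi_k\otimes\bigotimes_p\pi_p^\sigma$. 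That is not automatic and is not proved in \S\ref{Galact}. The paper fills this gap by (i) Proposition \ref{GaloisSatake} to match Satake parameters at unramified places, (ii) Waldspurger's theorem that $\pi_k\otimes\bigotimes_p\pi_p^\sigma$ is again a cuspidal automorphic representation of $\GL_2(\A_\Q)$, and (iii) strong multiplicity one to identify it with the representation attached to $f^\sigma$. Without (ii) and (iii) one knows only that $f^\sigma$ is a newform of the right weight and level, but not what its local components at the ramified primes are, which is precisely what the proposition requires. You should state and cite these global ingredients explicitly rather than folding them into \S\ref{Galact}.
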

\begin{remark}
We allow $k=2$ since our proof does not rely on the trace formula.
\end{remark}

\begin{proof}
By Proposition~\ref{prop:locgal}, for every $p \mid T$ we have
$\pi_p' \simeq \pi_p^{\sigma_p}$ for some $\sigma_p \in \Gal(\Q(\zeta_{p^{r_p}})^+/\Q)$.  
Let $n = \prod_{p \mid T} p^{r_p}$.  Noting that $\Q(\zeta_{a})\cap \Q(\zeta_b) = \Q(\zeta_{\gcd(a,b)})=\Q$
if $(a,b)=1$,
we have
\[\Gal(\Q(\zeta_n)/\Q) \simeq \prod_{p \mid T} \Gal(\Q(\zeta_{p^{r_p}})/\Q).\] 
Hence there exists $\sigma \in \Gal(\Q(\zeta_n)/\Q)$ which has image $\sigma_p$ in each 
quotient $\Gal(\Q(\zeta_{p^{r_p}})^+/\Q)$.  In particular, $\pi_p^\sigma=\pi_p'$ for each $p|T$.

Now let $f$ be a normalized Hecke eigenform in $S_k^{\new}(N;\pi_{T})$.
The automorphic representation attached to $f$ has the form
   $\pi = \pi_k\otimes \bigotimes_p \pi_p$. The newform $f^\sigma$ (see \S\ref{Galdec})
 corresponds to the automorphic representation $\pi_k\otimes \bigotimes_p\pi_p^\sigma$.  This follows
from Proposition \ref{GaloisSatake}, together with strong multiplicity-one and the fact that the latter 
is known to be a cuspidal automorphic representation of $\GL_2(\A_\Q)$ (\cite[Section I.8]{wald}).
Hence $\sigma$ defines a vector space isomorphism of $S_k^{\new}(N;\pi_{T})$ 
with $S_k^{\new}(N;\pi_{T}^\sigma)=S_k^{\new}(N;\pi_T')$.  In particular these spaces have 
the same dimension.
\end{proof}

\subsection{Proof of Theorem \ref{dim}}\label{dimproof}

\begin{lemma}\label{Phip}
Fix a tuple $\pi_T=(\pi_p)_{p|T}$ as in Theorem \ref{dim}.
Define
\[\Phi_T = \prod_{p|T} \Phi(\smat{}{-T}1{},f_{\pi_p}).\]
Then there exists $I_T\in\C$ depending only on the $r_p$ such that
\[\Phi_T  =\Delta(\pi_T)\e_{\pi_T}I_T,
\]
with notation as in Theorem \ref{dim}.
Consequently \eqref{dim1} becomes
\begin{equation}\label{dim2}
\dim S_k^{\new}(N;\pi_T) = \frac{k-1}{12}\psi^\new(M)\prod_{p|T}\frac{p^2-1}2p^{r_p-1} 
+ \Delta(\pi_T)\e_{\pi_T}A,
\end{equation}
where $A$ is a constant depending only on $k$, $N$ and $T$, and independent of the choice of $\pi_T$.
\end{lemma}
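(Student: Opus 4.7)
The plan is to establish the factorization of $\Phi_T$ in two steps: first extract $\e_{\pi_T}$ directly from the structure of the matrix coefficients $f_{\pi_p}$ at each $p\mid T$, then invoke Galois invariance (Proposition \ref{prop:dimeq}) to force the remaining factor to be universal.

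If $\Delta(\pi_T)=0$, some $E_p \neq \Q_p(\sqrt{-T})$, and the argument in the proof of Proposition \ref{Tprop} gives $\Phi(\g, f_{\pi_p})=0$ at that prime, so $\Phi_T = 0$. Assume henceforth $\Delta(\pi_T)=1$, and fix $p \mid T$. The integrand $f_{\pi_p}(g^{-1}\g g)$ vanishes unless $g^{-1}\g g \in J_{E_p,r_p} = E_p^\times U^{r_p}$. When it does not vanish, the identity $v_p(\det(g^{-1}\g g)) = v_p(-T) = 1$ together with $\det U^{r_p} \subset 1 + \p$ forces a decomposition $g^{-1}\g g = g_{\chi_p}\cdot u_E\cdot u$ with $u_E\in \O_{E_p}^\times$ and $u\in U^{r_p}$ (using that $g_{\chi_p}$ has $E_p$-valuation $1$). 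Since the central character is trivial, Proposition \ref{2main} gives $\lambda_p(g_{\chi_p}) = \e_p$, so
\[
f_{\pi_p}(g^{-1}\g g) = d_{\pi_p}\,\e_p\,\overline{\lambda_p(u_E)\chi_p(u)}.
\]
Integrating yields $\Phi(\g, f_{\pi_p}) = \e_p\,\tilde\Phi_p(\pi_p)$, where $\tilde\Phi_p(\pi_p)$ involves only $\lambda_p|_{\O_{E_p}^\times}$ and $\chi_p$, hence is unchanged by flipping the sign of $\lambda_p(g_{\chi_p})$ (the other inducing data being held fixed). Thus $\Phi_T = \e_{\pi_T}\prod_{p\mid T}\tilde\Phi_p(\pi_p)$.

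Next I claim that $\prod_p \tilde\Phi_p(\pi_p)$ is a constant $I_T$ depending only on $T$ and the $r_p$'s. By Proposition \ref{prop:dimeq}, $\dim S_k^{\new}(N; \pi_T)$ depends on $\pi_T$ only through $(E_p, \e_p)_{p\mid T}$. In \eqref{dim1} the identity term depends only on conductor exponents, and when $\Phi(\g, f)$ is factorized the non-$T$ local factors (at $\infty$, $q\mid M$, $\ell \nmid N$) and the global measure constant are independent of $\pi_T$. Hence $\Phi_T$, and therefore $\e_{\pi_T}\prod_p \tilde\Phi_p$, depends only on $(E_p, \e_p)_{p\mid T}$. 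For two tuples $\pi_T, \pi_T'$ sharing the same $(E_p)$, the explicit factor $\e_{\pi_T}$ already accounts for all sign dependence (by the sign-flip invariance of $\tilde\Phi_p$), so $\prod_p\tilde\Phi_p(\pi_p) = \prod_p\tilde\Phi_p(\pi_p')$. Since $\Delta(\pi_T)=1$ forces $E_p = \Q_p(\sqrt{-T})$ for each $p$, this common value $I_T$ depends only on $T$ and the $r_p$'s.

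Substituting $\Phi_T = \Delta(\pi_T)\e_{\pi_T}I_T$ into \eqref{dim1} yields \eqref{dim2}: the identity term $\frac{\pi}{3}f(1)$ reduces to the main term of \eqref{dim2} exactly as in the proof of Proposition \ref{Tprop}, while the elliptic term $\frac12 \Phi(\g, f)$ becomes $\Delta(\pi_T)\e_{\pi_T}A$, with $A$ absorbing $I_T$, the global measure factor, and the local orbital integrals at $\infty$ and at $q\mid M$ -- all of which depend only on $k, N, T$. The main obstacle in this argument is the sign-extraction step, which requires unpacking the Kutzko induced data and confirming well-definedness of $\chi_\lambda$ on $J_{E_p,r_p}$ via the defining compatibility $\lambda|_{U_E^{r_p}}=\chi|_{U_E^{r_p}}$; once this is in place, Galois invariance handles the rest of the work for free.
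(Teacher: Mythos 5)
Your argument is correct and takes essentially the same route as the paper's own proof: vanishing when $\Delta(\pi_T)=0$ via Proposition~\ref{Tprop}, factoring out $\e_p = \lambda_p(g_{\chi_p})$ from the support $g_{\chi_p}\Q_p^\times\O_{E_p}^\times U_p^{r_p}$, and then invoking Proposition~\ref{prop:dimeq} together with the factorization of $\Phi(\g,f)$ in \eqref{dim1} to pin down the sign-independent remainder $I_T$. The only differences from the paper are cosmetic (you spell out the coset decomposition $g^{-1}\g g = g_{\chi_p}u_E u$ in slightly more detail and reorder the sign-extraction and Galois-invariance steps), not substantive.
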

\begin{remark}
 When $p\mid T$, the definition of $\Phi(\g,f_{\pi_p})$ entails a choice of Haar measure on the compact
group $\ol{G_\g(\Q_p)}$.  The choice is immaterial here but for concreteness we normalize to give
it measure $1$, and consequently
\[\Phi(\g,f_{\pi_p})=\int_{\olG(\Q_p)}f_{\pi_p}(g^{-1}\smat{}{-T}1{}g)dg.\]
\end{remark}

\begin{proof}
By Proposition \ref{prop:dimeq} and \eqref{dim1}, $\Phi_T$ depends only on the $r_p$, 
the fields $E_p$, and the $\e_{\pi_p}$.
We already showed in Proposition \ref{Tprop} that $\Phi_{T}$ vanishes when $\Delta(\pi_{T})=0$.
So we may assume that $\Delta(\pi_{T})=1$, i.e., $E_p=\Q_p(\sqrt{pt_p})=\Q_p(\sqrt{-T})$ 
  for each $p\mid T$.

 Let $\g=\smat{}{-T}1{}$.  Since $\det\g=T$,
\begin{equation}\label{eout}
\Phi_{T} = \prod_{p|T}\int_{\olG(\Q_p)}f_{\pi_p}(g^{-1}\g g)dg
 = \e_{\pi_T}\prod_{p|T}\int_{C_p}f_{\pi_p}(g_{\chi_p}^{-1}g^{-1}\g g)dg
\end{equation}
by \eqref{epi},
where $C_p=\{g\in \olG(\Q_p)|\, g^{-1}\g g\in g_{\chi_p} \Q_p^\times \O_{E_p}^\times U_p^{r_p}\}$.
The value of the inducing character $\lambda_p$ (hence $f_{\pi_p}$ by \eqref{fp}) 
  on $\Q_p^\times\O_{E_p}^\times U^{r_p}_p$ is independent of $\e_p=\lambda_p(g_{\chi_p})$.
Thus, $I_T$ (represented by the product of the integrals over the $C_p$) depends only on the
conductors $p^{2r_p+1}$ of the $\pi_p$'s.

Globally we have
\begin{equation}\label{Phiglobal}
\Phi(\g,f) = \meas(\ol{G_\g(\Q)}\bs \ol{G_\g(\A)}) \Delta(\pi_{T})\e_{\pi_T}I_T
\prod_{\ell\nmid T}\Phi(\g,f_\ell).
\end{equation}
The result now follows from \eqref{dim1}, in view of the main term computed in Proposition
\ref{Tprop}.
\end{proof}

To prove Theorem \ref{dim}, we just need to compute $A$ from the above lemma.
By \eqref{orth},
\begin{equation}\label{Wp}
\tr(W_{T}|S_k^{\new}(N)) =\sum_{\pi_{T}: \e_{\pi_T}=1}\dim S_k^\new(N;\pi_{T})
-\sum_{\pi_{T}: \e_{\pi_T}=-1}\dim S_k^\new(N;\pi_{T}).
\end{equation}
The number of supercuspidal representations of $\PGL_2(\Q_p)$ with conductor $p^{2r_p+1}$ 
is $2p^{r_p-1}(p-1)$, exactly half of which have root number $+1$ (resp. $-1$), as described 
  in \S\ref{scmodel}.  
It follows easily that half of the tuples $\pi_T$ have $\e_{\pi_T}=+1$ (resp. $-1$), so 
applying the trace formula on the right-hand side of \eqref{Wp}, the main terms all cancel out.
Given $p|T$, of the possibilities for $\pi_p$, half, or $p^{r_p-1}(p-1)$, satisfy 
  the non-vanishing condition that $E_p=\Q_p(\sqrt{-T})$.  
So after eliminating the main terms in \eqref{Wp}, the
number of nonzero summands remaining is
\[\prod_{p|T}p^{r_p-1}(p-1).\]
By the above lemma, the nonzero terms that remain all have the same value, up to the sign $\e_{\pi_T}$
which is eliminated by the subtraction in \eqref{Wp}.  Thus, in the notation of the above lemma,
\[\tr(W_{T}|S_k^{\new}(N)) = A \prod_{p|T} p^{r_p-1}(p-1).\]
Solving for $A$, \eqref{main} of Theorem \ref{dim} follows from \eqref{dim2}.

Next, the calculations in \cite[Proposition 3.2]{M23} and \cite[\S 4.2]{M25} respectively, together with the 
minor corrections made in the Appendix below, give the following, with notation as in Theorem \ref{dim}:
\begin{itemize}
\item When $M=1$ and $N \ne 27$,  we have
\begin{equation}\label{WTM1}
 \tr(W_T|S_k^{\new}(N))  = {(-1)^{k/2}} b_{T,0}\, h(-T) \prod_{p|T}(p-1)p^{r_p-1}; 
\end{equation}
\item  When $T=p \ge 5$ is prime,
\begin{equation}\label{WTTp}
 \tr(W_p|S_k^{\new}(N)) = {(-1)^{k/2}}  (p - 1) p^{r-1} \, b_{p,v_2(M)} \, 
  \kappa_{-p}(M') \, h(-p).
\end{equation}
\end{itemize}
We note that the constant $b_{T,e}$ in 
Theorem \ref{dim} coincides with $c_{T,e}/2$, where $c_{T,e}$ is given by $\beta(N)$ in \cite[(1.1)]{M23} in the first case and by the class number coefficient in \cite[Table 2]{M25} in the second case.
The remaining parts of Theorem \ref{dim}, namely \eqref{M1} and \eqref{Tp}, follow immediately
upon substituting the above formulas into \eqref{main}.

\subsection{Extension to allow depth zero supercuspidals}\label{dz}

With the proof of Theorem \ref{dim} complete, we indicate here how it may be extended so as
to allow for prescribed depth zero supercuspidals at certain places.  First,
by \cite[(6.22),(5.6)]{K}, when $T\equiv 1\mod p$ and $\operatorname{cond}(\pi_p)=p^2$,
\[\Phi(\smat{}{-T}1{},f_{\pi_p}) = \begin{cases}2\e_p&\text{if }p\equiv 3\mod 4\\
  0&\text{if }p\equiv1\mod 4.\end{cases}\]
(When $p\equiv 1\mod 4$, we have $\left(\frac {-T}p\right)=\left(\frac{-1}p\right) =1$ and (ii) of
 Proposition \ref{Tprop} applies.)
 This allows us to extend Proposition \ref{prop:dimeq} to also include prescribed depth-zero supercuspidals
$\pi_p,\pi_p'$ at each $p\mid S$ as long as $\e_{\pi_p}=\e_{\pi_p'}$, 
   and $T\equiv 1\mod S$. 
One also needs to adjust for the fact that \eqref{orth} does not hold in this case. Instead we need
to consider $S$-minimal newforms, i.e., those whose level cannot be reduced at primes dividing $S$
by twisting.  Taking $N=M\prod_{p|T}p^{2r_p+1}\prod_{p|S}p^2$
with each $r_p\ge 1$, 
$S_k^{\text{\rm{S-min}}}(N)=\bigoplus_{\pi_{ST}}S_k^{\new}(N;\pi_{ST})$.

By similar (but slightly more involved) arguments to those that led to the proof of Theorem \ref{dim},
assuming $T\equiv 1 \pmod S$, $T\ge 5$ is odd, and $r_3=1$ if $3\mid T$, we find:
\begin{align}\label{main2}
\dim S_k^\new(N;\pi_{ST}) = \frac{k-1}{12}\psi^\new(M)&\prod_{p|T}\frac{p^2-1}2p^{r_p-1}
\prod_{p|S}(p-1)\\
\notag&+\Delta(\pi_{ST})\e_{\pi_{ST}}\frac{\tr(W_{ST}|S_k^{\text{\rm{S-min}}}(N))}
{\prod_{p|T}(p-1)p^{r_p-1}\prod_{\text{odd }p|S}\frac{p-1}2},
\end{align}
where $\Delta(\pi_{ST})\in\{0,1\}$ extends $\Delta(\pi_T)$ by assigning the value $0$ 
if $p\equiv 1\mod 4$ for some $p\mid S$, or if $T\equiv 3\mod 4$ and $S$ is even.

\begin{remark} By \cite[Theorem 6(ii)]{AL}, if $p \mid S$ and $f \in S_k^\new(N)$ is not 
$p$-minimal, then $W_p f = {-1 \leg p } f$.  Thus one can compute Atkin--Lehner  traces on 
$S$-minimal spaces from the traces on the full newspaces and dimensions of $d$-minimal 
subspaces for $d \mid S$.  For instance, when $S = p$ is prime, one has  
\[ \tr(W_{ST}|S_k^{\text{\rm{S-min}}}(N)) = \tr(W_{ST}|S_k^{\new}(N)) - {-1 \leg p}  
\left( \dim S_k^{\new}(N)) - \dim S_k^{\text{\rm{S-min}}}(N) \right). \]
One can compute the dimensions of $d$-minimal subspaces by subtracting away  
dimensions of non-minimal forms in a similar way to \cite{child}, which 
considers the ($N$-)minimal subspaces.
\end{remark}

\section*{Appendix: Errata to \cite{M23,M25}}

Here we correct two mathematical typographical errors 
in the formulas for $\tr(W_T|S_k^{\new}(N))$ from the published works \cite{M23,M25} 
that were used in the proof of Theorem~\ref{dim}.\footnote{Corrected versions of these papers are available at \url{https://arxiv.org/abs/2207.08121v3} and \url{https://arxiv.org/abs/2409.02338v3}.}

\begin{enumerate}
\item
We used \cite[Proposition 3.2]{M23} to explicate this trace when $M=1$, and to our knowledge that 
statement is correct.  However, the case of the proposition that we use here is also 
stated in \cite[Theorem 1.2]{M23}, but the definition of $\delta$ there has a misprint.  
It should read $\delta = 1$ if $(N_2, k) = (1,2)$ rather than if $(N,k) = (1,2)$.  
This agrees with \cite[Proposition 3.2]{M23}.

\item 
We used the calculations of \cite[\S 4]{M25} to explicate $\tr(W_T|S_k^{\new}(N))$ 
when $T=p$ is prime.
However, the equation just above \cite[Prop. 4.3]{M25} should read:
\begin{align*}
\aleph &= c_1 \left( \eta_{\Delta_0}(q^{\frac{r-1}2}) - 2 \eta_{\Delta_0}(q^{\frac{r-3}2}) + \delta_{r \ge 5} \eta_{\Delta_0}(q^{\frac{r-5}2}) \right) h'(\Delta_0) \\
&= c_1 \left(\sigma(q^{\frac{r-1}2}) - 2\sigma(q^{\frac{r-3}2}) + \delta_{r \ge 5} \sigma(q^{\frac{r-5}2}) \right) h'(\Delta_0).
\end{align*}
In \emph{loc.\ cit.}, the factor of $2$ in the middle terms on the right 
mistakenly appears inside the arguments of $\eta_{\Delta_0}$ and $\sigma$.  
The factor of 2 appears in the correct location in the definition of $\aleph$ several lines earlier.

In context, $q$ is a prime and $r\ge 3$ is an odd integer, so in fact this simplifies to
\[ \aleph = c_1 \left(q^{\frac{r-1}2}  - q^{\frac{r-3}2} \right) h'(\Delta_0). \]
This does not affect the proof of Proposition 4.3 or Theorem 1.1 in \cite{M25}.
\end{enumerate}

\vskip .5cm

\end{document}